\newcommand{\overbar}[1]{\mkern 2.0mu\overline{\mkern-4mu#1\mkern-2.0mu}\mkern 2.0mu}
\newcommand{\overtilde}[1]{\mkern 2.0mu\widetilde{\mkern-3mu#1\mkern-2.0mu}\mkern 2.0mu}
\definecolor{bblue}{rgb}{.2,0.2,.8}
\theoremstyle{plain}
\newtheorem{theorem}{Theorem}[section]
\newtheorem{proposition}[theorem]{Proposition}
\newtheorem{lemma}[theorem]{Lemma}
\newtheorem{corollary}[theorem]{Corollary}
\theoremstyle{definition}
\newtheorem{definition}[theorem]{Definition}
\newtheorem{assumption}[theorem]{Assumption}
\theoremstyle{remark}
\newtheorem{remark}[theorem]{Remark}
\numberwithin{equation}{section}
\numberwithin{theorem}{section}
\def\be{\begin{equation}}
\def\ee{\end{equation}}
\def\bp{\begin{pmatrix}}
\def\ep{\end{pmatrix}}
\def\bea{\begin{eqnarray}}
\def\eea{\end{eqnarray}}
\def\\{\par\medskip}
\newcommand*{\defeq}{\mathrel{\vcenter{\baselineskip0.5ex \lineskiplimit0pt
                     \hbox{\scriptsize.}\hbox{\scriptsize.}}}%
                     =}
\renewcommand{\epsilon}{\varepsilon}
\DeclareMathOperator{\Aut}{Aut}
\DeclareMathOperator{\Ann}{Ann}
\DeclareMathOperator{\Stem}{Stem}
\DeclareMathOperator{\rk}{rk}
\title[A classification of $\mathbb{Z}_p$-braces]{A classification of module braces   over the ring of $\mathbf{p}$-adic integers}
\author{Riccardo Aragona}
\address{\noindent Riccardo Aragona \hfill\break\indent 
 DISIM, Universit\`a dell'Aquila
\hfill\break\indent 
67100 Coppito, L'Aquila, Italy
}
\email{riccardo.aragona@univaq.it}
\author{Norberto Gavioli}
\address{\noindent Norberto Gavioli \hfill\break\indent 
	DISIM, Universit\`a dell'Aquila
	\hfill\break\indent 
	67100 Coppito, L'Aquila, Italy
}
\email{norberto.gavioli@univaq.it}
\author{Giuseppe Nozzi}
\address{\noindent Giuseppe Nozzi \hfill\break\indent 
	DISIM, Universit\`a dell'Aquila
	\hfill\break\indent 
	67100 Coppito, L'Aquila, Italy
}
\email{giuseppe.nozzi@graduate.univaq.it}
\begin{document}
\subjclass[2020]{16N20, 20N99, 20B35, 20K30, 15A63, 11E08}

\keywords{Module braces, radical rings, regular subgroups, lattices over $p$-adic integers,  symmetric bilinear forms}
\thanks{All  the   authors  are   members  of   INdAM-GNSAGA
  (Italy).} 
\begin{abstract}
%Braces were introduced by Rump, as a generalization of Jacobson radical rings,  for studying non-degenerate involutive set-theoretical solutions of the Yang–Baxter equation. 
In this paper we study the $R$-braces $(M,+,\circ)$ such that $M\cdot M$ is cyclic, where $R$ is the ring of $p$-adic and $\cdot$ is the product of the radical $R$-algebra associated to $M$. In particular, we give a classification up to isomorphism in the torsion-free case and up to isoclinism in the torsion case. More precisely, the isomorphism classes and the isoclinism classes of such radical algebras are in  correspondence with particular equivalence classes of the bilinear forms defined starting from the products of the algebras.
 
\end{abstract}

\noindent

\maketitle
\thispagestyle{empty}

\section{Introduction}
Braces were introduced by Rump in~\cite{RUMP2007153}, as a generalization of Jacobson radical rings, in order to use ring-theoretical and group-theoretical techniques as a tool in the study of non-degenerate involutive set-theoretical solutions of the Yang–Baxter equation.

Recently, different approaches to the study of braces were exploited, considering several connections between these algebraic structures, solutions of the Yang-Baxter equation (see, e.g.~\cite{BCJ,RUMP2007153}), the Hopf-Galois structures (see, e.g.~\cite{Hopf3,Hopf1}), 
and regular subgroups (see, e.g.~\cite{MR3465351,CDVS}). In particular in this paper we address this latter approach, considering the definition of $R$-braces given by Del Corso in~\cite{MR4648557}. In recent years another interesting application of braces to cryptanalysis of block ciphers has been investigated (see e.g.~\cite{civino2,civino1,fedelecritto}).

In this work we call brace a triple $(G,+,\circ)$, where $(G,+)$  and $(G,\circ)$ are both abelian groups such that the following conditions are satisfied
\begin{align*}
     x \circ (y + z) + x = x\circ y + x\circ z\quad\text{and}\quad  (x+y)\circ z +z = x\circ z + y\circ z,
\end{align*}
for every $x,y,z\in G$. A brace $(G,+,\circ)$ is called bi-brace if $(G,\circ,+)$ is also a brace.  It is well known that for an abelian group $(G,+)$, there is a one to one correspondence between bi-braces $(G,+,\circ)$, commutative radical rings $(G,+,\cdot)$ (see e.g. \cite{RUMP2007153}), where
\begin{equation}\label{ciserve}
    x\circ y= x+y+x\cdot y,
\end{equation}
and regular subgroups of $\mathrm{Sym}(G)$ contained in $\mathrm{Hol}(G)$ normalized by $G$ (see e.g. \cite{MR3465351,MR4130907}).

In \cite{CDVS,MR3647970} it has been proved that regular subgroups of the holomorph are in bijective correspondence with the functions, called \emph{gamma functions},
\begin{equation*}
\begin{array}{rcl}
    \gamma:G&\longrightarrow &\Aut(G)\\
    x&\longmapsto & \gamma_x: G \rightarrow G
\end{array}
\end{equation*}
such that $\gamma_{x + \gamma_x(y)} = \gamma_x\gamma_y$.

A brace $(G,+,\circ)$ is called $R$-brace if  $R$ is a commutative ring, the group $(G,+)$ has a structure of $R$-module and $\gamma(G)\subseteq  \Aut_R(G)$. The case when $R$ is a field has been studied also e.g. in \cite{catino4,smok2}.

%Let $R$ be a commutative ring, a brace $(G,+,\circ)$ is called $R$-brace if $(G,+)$ has a structure of $R$-module and $\gamma(G)\subseteq  \Aut_R(G)$. The case when $R$ is a field has been studied also e.g. in \cite{catino4,smok2}.

In this paper, given $p$ an odd prime, we study $R$-braces $M$, when $R$ is  the ring $\mathbb{Z}_p$ of $p$-adic integers, and $M\cdot M$ is a cyclic $R$-submodule. Our aim is to classify such $R$-braces, up to isomorphisms in the torsion-free case and up to isoclinism in the torsion case. A similar classification has already been made in~\cite{nozzi,fedelecritto} for $R=\mathbb{F}_q$, respectively for $q=p^k$, with $p$ odd, and for $q=2$. 

Let $M=\oplus_{i=1}^n \mathbb{Z}_p$ be a free $\mathbb{Z}_p$-module of rank $n$. Let us consider a commutative, torsion-free\ and $3$-nilpotent $\mathbb{Z}_p$-algebra $(M,+,\cdot)$ such that  the $\mathbb{Z}_p$-submodule $M\cdot M$ is cyclic. Let us consider the symmetric bilinear form $b\colon M\times M\to \mathbb{Z}_p$ induced by the product of the algebra $M$. We  show that the isomorphism classes of the algebras $M$ are determined by the congruence classes of these bilinear forms. Then, starting from a bilinear form over $M$, we define a bilinear form, and so a  product, on a torsion submodule of $M$, and we classify the isoclinism classes of the corresponding $\mathbb{Z}_p$-algebras. It is worth to underline that the already mentioned application of $R$-braces to the cryptanalysis of block ciphers has been  investigated in~\cite{civino2,civino1,fedelecritto} emphasizing the case when $M \cdot M$ has rank~$1$.

After the necessary background on braces and lattices over the ring of $p$-adic integers, respectively presented in Section~\ref{sec:1} and Section~\ref{sec:2}, in Section~\ref{sec:3} we deal with $R$-braces, with $R$ any commutative ring. In Section~\ref{sec:p}, we consider the torsion-free $\mathbb{Z}_p$-braces. In particular, we study the congruence classes of symmetric bilinear forms by way of the so-called \emph{Jordan splitting} for lattices over valuation rings (for more details, see e.g. \cite{MR1662447,MR0152507}). In Section~\ref{sec:p+1}, we consider torsion $\mathbb{Z}_p$-lattices %$M=\bigoplus_{i=1}^{s}(\mathbb{Z}_p/p^{k_i}\mathbb{Z}_p)^{h_i}$, where $h_1+\dots+h_s=n$,
with symmetric bilinear form $b$. 
Since $\mathbb{Z}_p=\varprojlim_{k\geq 1} \mathbb{Z}/p^k\mathbb{Z}$, it is always possible to classify the bilinear forms $b$ by lifting $M$ to a free $\mathbb{Z}_p$-lattice %$\oplus_{i=1}^n \mathbb{Z}_p$, 
and using the classification already obtained in that context.

In this way, the correspondence between commutative radical rings and bi-braces allow us, in the case when $R$ is $\mathbb{Z}_p$,  to give a complete classification, up to isomorphism in the torsion-free case and up to isoclinism in the torsion case, of $R$-braces $(M,+,\circ)$ with $M \cdot M$ is cyclic.

\section{Preliminaries on Braces}\label{sec:1}

In this section we give the basic notions on \emph{braces}, introduced by Rump~\cite{RUMP2007153}, and the definition of \emph{$R$-braces}, introduced by Del Corso in~\cite{MR4648557}.
\begin{definition}

    A \emph{brace} is a set $G$ with two binary operations, $+$ and $\circ$, such that $(G, +)$ is an abelian group, $(G, \circ)$ is a group, and  any $x,y,z\in G$ satisfy the following conditions
    \begin{align*}
     x \circ (y + z) + x = x\circ y + x\circ z\quad\text{and}\quad  (x+y)\circ z +z = x\circ z + y\circ z,
\end{align*}
%\begin{equation}\label{left-distr}
   % x \circ (y + z) + x = x\circ y + x\circ z.
%\end{equation}
%\noindent Analogously, we can define a \emph{right brace} if any $x,y,z\in G$ satisfy 
%\begin{equation}\label{right-distr}
%(x+y)\circ z +z = x\circ z + y\circ z.
%\end{equation}
%If both conditions \eqref{left-distr} and \eqref{right-distr} are satisfied, $(G,+,\circ)$ is called \emph{two-sided brace}, or simply brace.
\end{definition}

We note that braces were first introduced by Rump in~\cite{RUMP2007153} as a generalization of radical rings. In fact, if $(G,+,\circ)$ is a brace, it is possible to define a radical ring $(G,+,\cdot)$ by setting $
    x\cdot y\defeq x\circ y -x-y \quad\text{for } x,y\in G.
$

It is possible to reinterpret the definition of left brace in terms of regular subgroups of the holomorph of $G$.
 Let $\sigma:G\longmapsto \mathrm{Sym}(G)$ be the right regular representation of $G$, such that $g\sigma_x=g+x$. Recall that the holomorph of $G$ is defined by
 $$
 \mathrm{Hol}(G)=\Aut(G)\ltimes \sigma(G),
 $$ 
and it is isomorphic to the normaliser $N_{\mathrm{Sym}(G)}(\sigma(G))$ of $\sigma(G)$ in $\mathrm{Sym}(G)$.
 
Let $T_\circ$ be a regular permutation subgroup of $\mathrm{Hol}(G)$. We can give a labelling of the elements of $T_\circ$ by the elements of $G$ via $x\mapsto \tau_x$, where $\tau_x$ is the unique element in $T_\circ$ sending $1_G$ to $x$. Since $T_\circ\leq\mathrm{Hol}(G)$ then each element $\tau_x\in T$ can be written uniquely as a product of an element $\gamma_x\in\Aut(G)$ and an element $\sigma_x\in\sigma(G)$. The function $\gamma: G \longmapsto \Aut(G)$, sending $x\in G$ to $\gamma_x$, is called \emph{gamma function} (for more details see e.g., \cite{CDVS,MR4648557,MR3647970}). %,MR3807043,MR3639478, 
We can define the regular subgroup $(G,\circ)$ of $\mathrm{Hol}(G)$ by the operation
\begin{equation}
    x\circ y\defeq y\tau_x\quad \text{for each }x,y\in G,
\end{equation}
and obtaining that $(G,\circ)$ and $T_\circ$ are isomorphic. In particular, we have a one to one correspondence between braces with additive group $G$ and regular  subgroups of $\mathrm{Hol}(G)$. For more details, see~\cite[Proposition 2.3]{MR3465351}.

\begin{definition}
A brace $(G,+,\circ)$ is called \emph{bi-brace} if $(G,\circ,+)$ is also a brace.
\end{definition}

The same construction as above gives rise to a bijection between bi-braces $(G,+,\circ)$ and regular subgroups of $\mathrm{Hol}(G)$ normalized by $\sigma(G)$ (see~\cite[Theorem 3.1]{MR4130907}).\medskip

We conclude this section giving the definition of $R$-brace, introduced by Del Corso in~\cite{MR4648557}, and a useful result.
\begin{definition}
    Let $(G,+, \circ)$ be a brace with associated gamma function $\gamma\colon G \to \Aut(G)$. Assume that $(G,+)$ has a structure of left (right) module over some ring $R$. We say that $(G,+, \circ)$ is a left (right) \emph{module brace over $R$}, or simply $R$-brace, if $\gamma(G)\subseteq \Aut_R (G)$.
\end{definition}
%We recall that an $R$-ideal  of an $R$-brace $G$ is an ideal of $G$, i.e. a $\gamma(G)$-invariant subgroup which is normal in both $(G,+)$ and $(G,\circ)$, that is also an $R$-submodule of $G$.

%\begin{lemma}\label{lemdelcor}
%Let $(G,+,\circ)$ be an $R$-brace, and let $I$ be an $R$-ideal of $G$. The quotient brace $G/I$ is an $R$-brace.
%%\end{lemma}

Throughout this paper we will identify braces,  regular subgroups of the holomorph and radical rings, without further mention, unless necessary.

\section{Preliminaries on \texorpdfstring{$\mathbb{Z}_p$-}{Zp-}lattice}\label{sec:2} 
Let $p$ be an odd prime, $\mathbb{Z}_p$ be the ring of $p$-adic integers and $\mathbb{Q}_p$ be its field of fractions. Let $V$ be an $n$-dimensional $\mathbb{Q}_p$-vector space and let $q$ be a quadratic form over $V$. If $b$ is the symmetric bilinear form associated to $q$ then 
\begin{equation*}
    b(x,y)=\frac{1}{2}(q(x+y)-q(x)-q(y)).
\end{equation*}

\begin{definition}
  A subset $M$ of $V$ is called a \emph{$\mathbb{Z}_p$-lattice} on $V$ if $M$ is a finitely generated $\mathbb{Z}_p$-module.
\end{definition}
It is well known that $\mathbb{Q}_pM=V$ and $M$ is isomorphic to a free $\mathbb{Z}_p$-module of rank $n$ whenever $M$ is $\mathbb{Z}_p$-lattice.
From now on we will call lattice a couple $(M,b)$ and we shall omit $b$ unless necessary.\smallskip

A set of vectors is called \emph{basis} for $M$ if it is a basis for $M$ as a $\mathbb{Z}_p$-module, i.e.\ if it is a basis for $V$ and spans $M$ over $\mathbb{Z}_p$, i.e.\ $\lbrace x_1,\dots,x_n\rbrace$ is a basis for $M$ if and only if it is a basis for $V$ such that
    \begin{equation*}
        M=\mathbb{Z}_p x_1+\dots+\mathbb{Z}_p x_n.
    \end{equation*}

A $n\times n$ matrix $T=(t_{i,j})\in \mathbb{Q}_p^{n\times n}$ is said to be \emph{integral} if each of its entries is in $\mathbb{Z}_p$ and it is called \emph{unimodular} if it is integral and $\mathrm{det}(T)$ is a unit in $\mathbb{Z}_p$.
%\begin{remark}
   % If $\lbrace x_1,\dots,x_n\rbrace$ and $\lbrace y_1,\dots,y_n\rbrace$ are basis for the lattice $M$ then there exist an unimodular matrix $T=(t_{i,j})$ such that
    %\begin{equation*}
      %  y_j=\displaystyle\sum_{i=0}^n t_{i,j} x_i.
   % \end{equation*}
%\end{remark}

Let $M$ be a lattice on $V$ and let $A=(b(x_i,x_j))$ and $A'=(b(x_i',x_j'))$ be the matrices associated to $b$ with respect to the basis $\lbrace x_1,\dots,x_n\rbrace$ and $\lbrace x_1',\dots,x_n'\rbrace$ of $M$ then
\begin{equation*}
    A'=T A T^{tr}.
\end{equation*}
for some unimodular matrix $T$.

 As a consequence we get that
\begin{equation*}
    \mathrm{det}(A')=\epsilon^2 \mathrm{det}(A)
\end{equation*}
for some unit $\epsilon$ in $\mathbb{Z}_p$. Hence the canonical image $d(M)$ of $\mathrm{det}(A)$ in $\mathbb{Q}_p^\times /\mathbb{Z}_p^{\times 2}$, called \emph{discriminant} of $M$, is independent of the basis chosen for $M$. 
\begin{definition}
    A lattice $M$ is said to be \emph{unimodular} if the matrix associated to $b$ is unimodular.
\end{definition}

\begin{definition}
We define the \emph{radical} of a lattice $M$ in $V$ to be the sublattice 
\begin{equation*}
    \mathrm{Rad}(M)=\lbrace x\in M \mid b(x,M)=0\rbrace
\end{equation*}
and $M$ is called \emph{regular} if $\mathrm{Rad}(M)=0$.
\end{definition}

We will say that $N$ is the orthogonal sum of two lattices $K$ and $L$, and we will write \(N=K\perp L\), if $N=K\oplus L$  and $N$ is endowed with the bilinear form $b$ which coincides with the forms of $K$ and $L$ when restricted over these submodules and $b(K,L)=0$.
\begin{proposition}[Radical splitting]\label{proposition:splitting}\cite{MR0152507}
Let $M$ be a lattice on $V$. There exists a regular sublattice $K$ such that $M=K\perp \mathrm{Rad}(M)$.
\end{proposition}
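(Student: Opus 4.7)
The plan is to produce the regular complement $K$ in two steps: first I would obtain a $\mathbb{Z}_p$-module direct sum decomposition $M = K \oplus \mathrm{Rad}(M)$, and then I would read off the orthogonality and the regularity of $K$ directly from the definition of $\mathrm{Rad}(M)$.

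For the first step, I note that $M$ is finitely generated and torsion-free over the PID $\mathbb{Z}_p$ (being a sublattice of the $\mathbb{Q}_p$-vector space $V$), hence free; the same applies to $\mathrm{Rad}(M)$. To split the short exact sequence
\[
0 \longrightarrow \mathrm{Rad}(M) \longrightarrow M \longrightarrow M/\mathrm{Rad}(M) \longrightarrow 0,
\]
the essential point is to show that $\mathrm{Rad}(M)$ is pure in $M$, i.e.\ that $M/\mathrm{Rad}(M)$ is torsion-free. If $p^{k} x \in \mathrm{Rad}(M)$ for some $x \in M$ and $k \geq 1$, then for every $y \in M$ we have $p^{k} b(x,y) = b(p^{k}x,y) = 0$ in $\mathbb{Q}_p$; since $\mathbb{Q}_p$ is a field this forces $b(x,y) = 0$, so $x \in \mathrm{Rad}(M)$. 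The quotient $M/\mathrm{Rad}(M)$ is then a finitely generated torsion-free module over a PID, hence free, so the sequence splits and we obtain a $\mathbb{Z}_p$-submodule $K$ with $M = K \oplus \mathrm{Rad}(M)$.

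For the second step, orthogonality comes for free: $b(K,\mathrm{Rad}(M)) \subseteq b(M,\mathrm{Rad}(M)) = 0$, so $M = K \perp \mathrm{Rad}(M)$. Regularity of $K$ then follows from a short chase: if $x \in \mathrm{Rad}(K)$, then $b(x,K) = 0$, and also $b(x,\mathrm{Rad}(M)) = 0$ because $b(\mathrm{Rad}(M),M) = 0$ and $b$ is symmetric; by linearity $b(x,M) = b(x, K + \mathrm{Rad}(M)) = 0$, so $x \in \mathrm{Rad}(M) \cap K = \{0\}$.

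The only non-formal step is the purity argument for $\mathrm{Rad}(M)$, but since $p^{k}$ is invertible in $\mathbb{Q}_p$ this reduces to a one-line verification; I do not expect a genuine obstacle. In essence, the proposition is a module-theoretic splitting theorem over the PID $\mathbb{Z}_p$, packaged through the bilinear form $b$.
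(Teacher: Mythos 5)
Your proof is correct, and since the paper gives no proof of its own (it simply cites \cite{MR0152507}), your argument is exactly the standard one from that reference: purity of $\mathrm{Rad}(M)$ forces $M/\mathrm{Rad}(M)$ to be free over the PID $\mathbb{Z}_p$, the sequence splits, and orthogonality plus regularity of the complement follow formally from the definition of the radical. No gaps.
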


We recall that a $\mathbb{Z}_p$-submodule  \(I\) of $\mathbb{Q}_p$ is called a \emph{fractional ideal} if there exists $\lambda \in \mathbb{Q}_p$ such that $\lambda I\subseteq \mathbb{Z}_p$. Notice that $\lambda I$ is a principal ideal in $\mathbb{Z}_p$ so that $I$ is cyclic as a $\mathbb{Z}_p$-module. It is easy to see that every non-trivial finitely generated $\mathbb{Z}_p$-submodule of $\mathbb{Q}_p$ is a fractional ideal.

\begin{definition}
The \emph{scale} of a lattice $M$, denoted by $\mathfrak{s}(M)$, is the fractional ideal generated by the subset $b(M,M)$ of $\mathbb{Q}_p$.
\end{definition}

An isomorphism between two lattices is an isometry if it preserves the bilinear form up to an invertible element in $\mathbb{Z}_p$. We will write \(L\cong M\) to mean \(L\) and \(M\) are isometric.

%\begin{definition} 
%We will say that two lattices $M$ and $L$ are isometric, and we will write $M\cong L$, if there exists %an isometry $\psi:\mathbb{Q}_p M\longmapsto \mathbb{Q}_p L$ such that $\psi(M)=L$.
%\end{definition}

\begin{theorem}\label{theorem:unimodular}\cite{MR0152507}\label{isocla}
If $M$ is unimodular then 
\begin{equation}
    M\cong  \mathbb{Z}_p\perp\dots\perp \mathbb{Z}_p\perp \mathrm{d}(M)\mathbb{Z}_p
\end{equation}
where $\mathrm{d}(M)$ is the discriminant of $M$.
\end{theorem}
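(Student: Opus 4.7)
The plan is to induct on the rank $n$ of $M$. The base case $n=1$ is immediate: a generator $x$ of $M$ has $b(x,x)=u\in\mathbb{Z}_p^\times$ by unimodularity, and $d(M)=u$ modulo squares, so $M\cong d(M)\mathbb{Z}_p$ by definition.

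For the inductive step I first extract a vector $x\in M$ with $b(x,x)\in\mathbb{Z}_p^\times$. Fix a basis $\{x_1,\dots,x_n\}$ with Gram matrix $A=(b(x_i,x_j))$; since $\det A\in\mathbb{Z}_p^\times$, either some $b(x_i,x_i)$ is already a unit (take $x=x_i$), or every diagonal entry of $A$ lies in $p\mathbb{Z}_p$. In the latter case, reducing $A$ modulo $p$ yields a matrix with zero diagonal but non-zero determinant, forcing some off-diagonal $b(x_i,x_j)$ to be a unit; and since $p$ is odd, $2\in\mathbb{Z}_p^\times$ gives
\[
b(x_i+x_j,\,x_i+x_j)=b(x_i,x_i)+2\,b(x_i,x_j)+b(x_j,x_j)\in\mathbb{Z}_p^\times,
\]
so $x=x_i+x_j$ works.

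Once such an $x$ is found, the invertibility of $b(x,x)$ makes the projection $y\mapsto y-\frac{b(x,y)}{b(x,x)}x$ well defined with image $x^\perp=\{y\in M : b(x,y)=0\}$, producing an orthogonal splitting $M=\mathbb{Z}_p x\perp x^\perp$ in which both summands remain unimodular (the Gram determinant of $x^\perp$ is $\det A/b(x,x)$). Applying the inductive hypothesis to $x^\perp$ gives
\[
M\cong \langle u\rangle\perp\mathbb{Z}_p\perp\cdots\perp\mathbb{Z}_p\perp d(x^\perp)\mathbb{Z}_p,
\]
with $u=b(x,x)$ and $\langle\cdot\rangle$ denoting the rank-one lattice with the specified Gram entry.

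The final step is a binary normalization: for any units $u,v\in\mathbb{Z}_p^\times$, $\langle u\rangle\perp\langle v\rangle\cong\langle 1\rangle\perp\langle uv\rangle$. To see this, solve $ua^2+vb^2=1$ in $\mathbb{Z}_p$: it is solvable modulo $p$ because any non-degenerate binary form over $\mathbb{F}_p$ with $p$ odd represents every element (standard counting of $\{ua^2\}$ against $\{1-vb^2\}$), and Hensel's lemma lifts the solution. The pair $\{(a,b),\,(-vb,ua)\}$ has Gram matrix $\mathrm{diag}(1,uv)$ and Jacobian determinant $ua^2+vb^2=1$, so forms a $\mathbb{Z}_p$-basis realising the isometry. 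Applying this identity to the leading factor $\langle u\rangle$ and the trailing factor $\langle d(x^\perp)\rangle$, together with the multiplicativity $d(M)=u\cdot d(x^\perp)$ modulo squares, yields the claimed normal form. I expect the rank-two normalization to be the main obstacle: the earlier parts are Gram--Schmidt adapted to a lattice, but the reduction $\langle u\rangle\perp\langle v\rangle\cong\langle 1\rangle\perp\langle uv\rangle$ is where the $p$-adic theory really enters, through Hensel-lifting the mod-$p$ universality of non-degenerate binary forms.
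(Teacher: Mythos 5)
Your proof is correct and complete: the paper does not prove this statement but simply cites it from O'Meara's \emph{Introduction to Quadratic Forms} [MR0152507], and your argument (splitting off a unit-norm vector by the Gram--Schmidt projection, which exists because an odd-$p$ unimodular Gram matrix cannot have all diagonal entries in $p\mathbb{Z}_p$ without forcing $x_i+x_j$ to have unit norm, followed by the binary normalization $\langle u\rangle\perp\langle v\rangle\cong\langle 1\rangle\perp\langle uv\rangle$ via Hensel-lifting the mod-$p$ universality of nondegenerate binary forms) is essentially the classical proof given there. The only point worth making explicit is that in the Hensel step the mod-$p$ solution of $ua^2+vb^2=1$ has $a_0\not\equiv 0$ or $b_0\not\equiv 0$, so one of the partial derivatives $2ua_0$, $2vb_0$ is a unit (using $p$ odd), which is exactly what the lifting requires.
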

As a conquence we get that the isometry class of an unimodular lattice is completely determined by its rank and its discriminant.\smallskip

%\begin{theorem}[Jordan splitting]\label{theorem:jordan}\cite{MR0152507}
%Every non-zero regular lattice $M$ in a quadratic space $V$ has at least one $Jordan\ splitting$,z i.e. there exists a basis for $M$ such that 
%\begin{equation*}
 %   M=M_1\perp M_2\perp\dots\perp M_t
%\end{equation*}
%and $\mathfrak{s}(M_1)\supsetneq\mathfrak{s}(M_2)\supsetneq\dots\supsetneq \mathfrak{s}(M_t)$.
%Furthermore if 
%\begin{equation*}
 %    M=K_1\perp K_2\perp\dots\perp K_s
%\end{equation*}
%is another Jordan splitting of $M$, then $t=s$ and $M_i\cong K_i$ for all $i$.
%\end{theorem}
%From now on we consider $R=\mathbb{Z}_p$ and $V$ a vector space over $\mathbb{Q}_p$.
%As a direct consequence 
Now we state the well known Jordan splitting  theorem for lattices 
%in the general case in~\cite{MR0152507} and formulated in the case of $\mathbb{Z}_p$ 
(see \cite{MR1662447} for a proof). 
\begin{theorem}[Conway and Sloane]\label{corollary:corollaryJordan}
    Up to isometries a  regular lattice $(M,b)$ can be uniquely decomposed as 
      \begin{equation}
        (M,b)=\perp_{i=1}^\infty  (M_i,b_i)
%        
 %       (M_{i_1},b_{i_1})\perp (M_{i_2},b_{i_2})\perp\dots\perp (M_{i_t},b_{i_t})
    \end{equation}
where     \(M_i\cong \mathbb{Z}_p^{n_i}\), with \(n_i> 0\) for finitely many \(i\)'s, and \(b_i\) is the bilinear form whose, possibly zero, \(n_i \times n_i\) associated matrix is 
\[J_i (\epsilon)=p^{i}\begin{pmatrix}
    \mathds{1}&0\\
    0&\varepsilon
\end{pmatrix}\] 
and $\varepsilon$ is either $1$ or a pre-selected non-square of $\mathbb{Z}_p$.
%\begin{equation*}
 %    M=K_{1}\perp K_{2}\perp\dots\perp K_{s}
%\end{equation*}
%is another Jordan splitting of $M$, then $t=s$ and $M_{j}\cong K_{j}$ for all $j=1,\dots,t$.
\end{theorem}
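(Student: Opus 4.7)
My plan is to prove existence by iteratively splitting off orthogonal summands of pure scale, and uniqueness via a scale–filtration invariant. After showing that a regular lattice can be broken up as an orthogonal sum of sublattices whose Gram matrices are of the form $p^{i}U_i$ with $U_i$ unimodular, I would apply Theorem~\ref{isocla} to reduce each summand to the standard form $J_i(\varepsilon)$. The last reduction is exactly where the odd-prime hypothesis simplifies matters, since then $\mathbb{Z}_p^{\times}/\mathbb{Z}_p^{\times 2}$ has order two and so only one preselected non-square $\varepsilon$ is needed.

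For the existence of a pure-scale summand, let $k$ be the integer with $\mathfrak{s}(M)=p^k\mathbb{Z}_p$. The crucial step is to produce $x\in M$ with $b(x,x)\mathbb{Z}_p = p^k\mathbb{Z}_p$. This follows from polarisation: since $2\in\mathbb{Z}_p^{\times}$, the identity
\[
2b(x,y)=b(x+y,x+y)-b(x,x)-b(y,y)
\]
shows that $\mathfrak{s}(M)$ is already generated by the diagonal values $\{b(z,z):z\in M\}$, so at least one such value is a generator. Given such an $x$, the map
\[
y\longmapsto y-\frac{b(y,x)}{b(x,x)}\,x
\]
lands in $M\cap(\mathbb{Z}_p x)^{\perp}$ because $b(x,x)$ divides $b(y,x)$ in $\mathbb{Z}_p$, and this yields $M=\mathbb{Z}_p x\perp (\mathbb{Z}_p x)^{\perp}$ as lattices. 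Iterating on the orthogonal complement and grouping the rank-one pieces that share the same scale, I would obtain a decomposition $M=\perp_i(M_i,b_i)$ where each $b_i=p^i\tilde{b}_i$ for a unimodular form $\tilde{b}_i$ on $M_i\cong\mathbb{Z}_p^{n_i}$. Theorem~\ref{isocla} then gives $(M_i,\tilde{b}_i)\cong \mathbb{Z}_p^{n_i-1}\perp d(M_i)\mathbb{Z}_p$, and since every unit is a square or a square times the fixed non-square $\varepsilon$, the Gram matrix of $b_i$ is exactly $J_i(\varepsilon)$.

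For uniqueness, I would use the dual filtration
\[
M^{(j)}=\{x\in M : b(x,M)\subseteq p^{j}\mathbb{Z}_p\}.
\]
The successive quotients $M^{(i)}/M^{(i+1)}$ inherit a non-degenerate form from $p^{-i}b$ over $\mathbb{F}_p$; its dimension determines $n_i$, and its discriminant over $\mathbb{F}_p^{\times}/\mathbb{F}_p^{\times 2}$ determines whether the $\varepsilon$ appearing in $J_i$ is trivial or not. Since both invariants depend only on $(M,b)$, any two Jordan decompositions must share all the parameters $(n_i,\varepsilon_i)$, proving uniqueness up to isometry block-by-block.

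The main obstacle is the existence step: ensuring that a rank-one sublattice $\mathbb{Z}_p x$ can be split off as an \emph{integral} orthogonal summand. This rests on realising the scale via a single diagonal value $b(x,x)$, which is exactly where $p$ odd is essential. When $p=2$ the polarisation identity no longer controls the scale through diagonal entries, and one must enlarge the statement to allow binary Jordan blocks of types $\bigl(\begin{smallmatrix}0&1\\1&0\end{smallmatrix}\bigr)$ and $\bigl(\begin{smallmatrix}2&1\\1&2\end{smallmatrix}\bigr)$; under the standing assumption that $p$ is odd the argument above proceeds cleanly.
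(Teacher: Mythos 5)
The paper does not actually prove Theorem~\ref{corollary:corollaryJordan}: it is quoted from Conway and Sloane with a pointer to the literature, so there is no internal argument to compare yours against. Judged on its own, your existence argument is correct and is the standard one for odd $p$: polarisation (using $2\in\mathbb{Z}_p^{\times}$) produces a vector $x$ with $b(x,x)$ generating the scale, the integrality of $b(y,x)/b(x,x)$ splits off $\mathbb{Z}_p x$ as an orthogonal summand of the lattice, induction on the rank diagonalises $M$, and grouping the rank-one pieces by scale together with Theorem~\ref{isocla} puts each constant-scale block into the form $J_i(\varepsilon)$. Your closing remark about why $p=2$ forces extra binary blocks is also accurate.

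The uniqueness half, however, contains a concrete misstatement. With respect to a Jordan decomposition $M=\perp_l(M_l,p^l\tilde b_l)$ one computes $M^{(j)}=\perp_l p^{\max(j-l,0)}M_l$, so $M^{(i)}/M^{(i+1)}$ has $\mathbb{F}_p$-dimension $\sum_{l\le i}n_l$, not $n_i$, and the form induced by $p^{-i}b$ on it is \emph{degenerate} as soon as some $n_l>0$ with $l<i$ is present: the image of such an $M_l$ lies in the radical of the induced form, since $p^{-i}b(p^{i-l}u,p^{i-l}v)=p^{i-l}\tilde b_l(u,v)\in p\mathbb{Z}_p$ for $l<i$. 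Consequently neither the dimension nor the discriminant can be read off directly as you claim. The fix is standard and short: pass to the quotient of $(M^{(i)}/M^{(i+1)},\,p^{-i}b\bmod p)$ by its radical; this quotient is canonically isometric to $(M_i/pM_i,\,\tilde b_i\bmod p)$, which is non-degenerate because $\tilde b_i$ is unimodular and $p$ is odd, so its dimension recovers $n_i$ and its discriminant in $\mathbb{F}_p^{\times}/\mathbb{F}_p^{\times 2}$ recovers the class of $\varepsilon$ in $J_i(\varepsilon)$. With that correction both invariants are intrinsic to $(M,b)$ and your block-by-block uniqueness argument goes through.
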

%We shall call the  sublattices $M_{i_j}$ appearing in the statement of the previous result the $i_j$-\emph{Jordan component} of $M$.
With the same notation of the previous theorem, we define $b=(b_i)_{i=1}^\infty$ to be the Jordan decomposition of the bilinear form $b$.

We now introduce an equivalence relation between lattices. 
\begin{definition}\label{pteq}
    Two lattices $(M,b_M)$ and $(N,b_N)$ are said to be $p^t$-\emph{equivalent} and we will write $$(M,b_M)\sim_{p^t}(N,b_N),$$ if there exists $\alpha\in \mathbb{Z}_p^{\times}$ and 
    two lattices $(P_1,g_1)$ and $(P_2,g_2)$ with $\mathfrak{s}(P_1)=\mathfrak{s}(P_2)=(p^t)$, such that 
\begin{equation*}
    (M,b_M)\perp(P_1, g_1)\cong (N,\alpha b_N)\perp (P_2, g_2).
\end{equation*}
\end{definition}
%\begin{definition}
 %   Let $M$ be a lattice. The orthogonal sum of the Jordan components $M_i$ of $M$ with $i<t$ is said to be the component of $M$ modulo $p^t$ and we will denote it by $M[p^t]$.
  %  We shall say that $M$ is a $p^t$-minimal lattice if $M=M[p^t]$.
%\end{definition}
\begin{remark}\label{ptremark}
    We notice that $M\sim_{p^t}M'$ if and only if the bilinear forms $b$ and $b'$, respectively of $M$ and $M'$, have proportional Jordan decomposition modulo $p^t$. In other words there exists $\alpha\in \mathbb{Z}_p^{\times}$ such that $$(b_1,\ldots, b_{t-1})\cong\alpha (b'_1,\ldots, b'_{t-1}).$$With an abuse of language we will write $\rk(b_{l})$ to denote the rank $n_l$ of the lattice $(M_{l},b_{l})$. 

\end{remark}

%of scale \((p^h)\) with \(h< t\) up to a multiplication with an invertible element. 

\section{Module braces over a commutative ring}\label{sec:3} 
Let $n$ be a positive integer and $R$ be a commutative ring. Let us consider
\begin{equation}\label{equationM}
    M=\bigoplus_{i=1}^n R
\end{equation} 
the free module of rank $n$ over $R$ and $\lbrace e_1,\dots,e_n\rbrace$ the canonical basis for $M$. Let us denote by $\Aut_{R}(M)$ the automorphism group of the $R$-module $M$ and by 
\begin{equation}\label{eq:affine}
 \mathrm{Aff}(M)=\Aut_{R}(M)\ltimes M
\end{equation} 
the affine group of $M$. The automorphism group of $M$ is
\begin{equation*}
    \Aut_{R}(M)= \mathrm{GL}_{R}(M),
\end{equation*}
which is to the group of unimodular $n\times n$ matrices over~$R$.

Let $T_+$ be the translation group of $(M,+)$, for \(a\in M\) we denote by $\sigma_a$ the translation sending $0$ to $a$.\ Notice that $+$ operation on $M$ can be defined by $a+b\defeq a\sigma_b$, for $a,b\in M$,  so that $(M,+)\cong T_+$ and 
$$
\mathrm{Aff}(M,+)= \mathrm{Hol}(T_+)\cong N_{\mathrm{Sym(M)}}(T_+).
$$

If $T_\circ$ is a regular, abelian subgroup of $\mathrm{Aff}(M)$, then
\begin{equation*}
    T_\circ=\lbrace \tau_a \mid a\in M\rbrace
\end{equation*}
where $\tau_a$ is the unique permutation of $T_\circ$ sending $0$ to $a$. By Equation \eqref{eq:affine}, each element in $T_\circ < \mathrm{Aff}(M)$ can be written 
\begin{equation}\label{dectau}
\tau_a=\gamma_a\sigma_a, \text{ with } \gamma_a\in \mathrm{GL}_{R}(M). 
\end{equation}
Moreover, it is possible to define a new operation \(\circ\) on \(M\) by 
\begin{equation}\label{eq:circoperation}
    a\circ b\defeq a\tau_b
\end{equation}
endowing \(M\) of the structure of a regular abelian group such that $(M,\circ)\cong T_\circ$.

%the regular, abelian group $(M,\circ)\cong T_\circ$, such that the $\circ$ operation is defined by
%\begin{equation}\label{eq:circoperation}
%    a\circ b\defeq a\tau_b.
%\end{equation}

For any $a \in M$, let us denote by $\delta_a$ the endomorphism $\gamma_a-\mathbb{1}_M$ of $(M,+)$. Using the map $\delta_a$ it is possible to define a product on $M$ given by
\begin{equation}\label{eq:prodotto}
    a\cdot b\defeq a\delta_b\quad \text{for all }a,b\in M.
\end{equation}
In the rest of the paper by algebra over a ring \(R\) we shall mean a commutative \(R\)-algebra with finite rank as an \(R\)-module.

\begin{theorem}[\cite{RUMP2007153}]\label{thm:radalg}
   The triple $(M,+,\cdot)$ is a commutative, associative $R$-algebra such that the resulting ring is radical.
\end{theorem}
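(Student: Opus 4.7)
The plan is to extract two structural identities from the factorisation $\tau_a=\gamma_a\sigma_a$ in $\mathrm{Aff}(M)=\mathrm{GL}_R(M)\ltimes M$ and then read off every algebra axiom from them. First, a direct evaluation gives
\begin{equation*}
a\circ b \;=\; a\tau_b \;=\; a\gamma_b+b \;=\; a+b+a(\gamma_b-\mathbb{1}_M) \;=\; a+b+a\cdot b,
\end{equation*}
so the $\circ$-operation already coincides with the one associated to the product $\cdot$ as in~\eqref{ciserve}. Second, computing $\tau_a\tau_b$ on a generic $x\in M$ and using that $\gamma_b$ is $R$-linear yields $\tau_a\tau_b=\gamma_a\gamma_b\,\sigma_{a\gamma_b+b}$; since $T_\circ$ is regular, the uniqueness of the semidirect factorisation in $\mathrm{Aff}(M)$ forces
\begin{equation*}
\gamma_{a\circ b}\;=\;\gamma_a\gamma_b.
\end{equation*}

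From this foundation the algebra axioms fall out in order. $R$-bilinearity in the first slot of $\cdot$ is immediate, since $\delta_c\in\mathrm{End}_R(M)$ gives $(a+b)\cdot c=a\delta_c+b\delta_c$ and $(ra)\cdot c=r(a\cdot c)$. Commutativity of $\cdot$ reduces to $a\circ b=b\circ a$, which holds because $T_\circ$ is abelian; $R$-bilinearity in the second slot then follows from commutativity and the first-slot case. The only step requiring a genuine computation is associativity: the identity $(a\circ b)\circ c=a\circ(b\circ c)$ holds in the group $T_\circ$, and expanding both sides through $x\circ y=x+y+x\cdot y$, then cancelling the terms that already match by bilinearity and commutativity, leaves exactly $(a\cdot b)\cdot c=a\cdot(b\cdot c)$.

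Finally, the resulting commutative associative $R$-algebra is a radical ring: since $(M,\circ)$ is a group with neutral element~$0$, every $x\in M$ admits a $\circ$-inverse $y$, and $x\circ y=x+y+x\cdot y=0$ is exactly the statement that $x$ is quasi-regular in $(M,+,\cdot)$. I expect the only places requiring care to be the derivation of $\gamma_{a\circ b}=\gamma_a\gamma_b$, which is the engine of the whole argument, and the associativity expansion, which is purely mechanical; everything else is a translation between the three descriptions $\tau$, $\gamma$, and $\delta$ of the same data.
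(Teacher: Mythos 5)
Your argument is correct, and it is essentially the standard proof underlying this statement: the paper itself gives no proof, citing Rump \cite{RUMP2007153} (see also \cite{CDVS}), and the argument in those sources is exactly this translation between $\tau$, $\gamma$ and $\delta$ — deriving $\tau_a\tau_b=\tau_{a\circ b}$ from regularity and closure of $T_\circ$, reading bilinearity off $\delta_c\in\mathrm{End}_R(M)$, and obtaining commutativity, associativity and quasi-regularity from the corresponding group-theoretic properties of $(M,\circ)$. The only cosmetic remark is that the real engine is the identity $\tau_a\tau_b=\tau_{a\circ b}$ rather than its corollary $\gamma_{a\circ b}=\gamma_a\gamma_b$, which your subsequent steps never actually invoke.
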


 \begin{lemma}[\cite{CDVS}]\label{lemma:lemma3}
    For all $a,b\in M$
    \begin{equation*}
        [\sigma_a,\tau_b]=\sigma_{a\cdot b}
    \end{equation*}
    \end{lemma}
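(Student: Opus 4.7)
The plan is a direct computation. Both $\sigma_a$ and $\tau_b$ sit inside the affine group $\mathrm{Aff}(M) = \mathrm{GL}_R(M)\ltimes M$, so their commutator is again affine; moreover the ``linear part'' of the commutator is $[\mathbb{1}_M,\gamma_b] = \mathbb{1}_M$, hence $[\sigma_a,\tau_b]$ automatically lies in the translation subgroup $T_+$. Consequently the identity $[\sigma_a,\tau_b] = \sigma_{a\cdot b}$ is a statement about a single element of $M$, and it suffices to identify that element by evaluating the commutator at any convenient point.

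The key step is to unpack the definitions using the right-action convention $x\sigma_a = x+a$ fixed in Section~\ref{sec:3}, the decomposition $\tau_b = \gamma_b\sigma_b$ from \eqref{dectau}, and the formula $a\cdot b = a\delta_b = a\gamma_b - a$ from \eqref{eq:prodotto}. For an arbitrary $x\in M$ one reads off
\[
 x(\sigma_a\tau_b) = (x+a)\gamma_b + b = x\gamma_b + a\gamma_b + b,
 \qquad
 x(\tau_b\sigma_a) = x\gamma_b + b + a,
\]
whose difference is $a\gamma_b - a = a\cdot b$. In other words, as permutations of $M$,
\[
 \sigma_a\tau_b \;=\; \tau_b\sigma_a\sigma_{a\cdot b}.
\]
Rearranging with the commutator convention $[x,y] = x^{-1}y^{-1}xy$ and using that translations commute among themselves gives
\[
 [\sigma_a,\tau_b] \;=\; \sigma_a^{-1}\tau_b^{-1}\sigma_a\tau_b \;=\; \sigma_a^{-1}\tau_b^{-1}\tau_b\sigma_a\sigma_{a\cdot b} \;=\; \sigma_{a\cdot b},
\]
which is exactly the claim.

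There is no real obstacle here: the proof is a one-line unfolding of the definitions. The only care required is consistency with the paper's conventions, namely the right action (so that $x(fg) = (xf)g$) and the commutator sign — the latter is actually pinned down by the statement itself, since the opposite convention $[x,y] = xyx^{-1}y^{-1}$ would produce $\sigma_{(a\cdot b)\gamma_b^{-1}}$ rather than $\sigma_{a\cdot b}$.
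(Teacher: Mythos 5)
Your proof is correct. The paper does not actually prove this lemma --- it is quoted from \cite{CDVS} without argument --- and your direct computation ($x(\sigma_a\tau_b)-x(\tau_b\sigma_a)=a\gamma_b-a=a\cdot b$, hence $\sigma_a\tau_b=\tau_b\sigma_a\sigma_{a\cdot b}$ and the commutator collapses to $\sigma_{a\cdot b}$) is exactly the standard verification, consistent with the conventions the paper uses elsewhere (e.g.\ in the discussion after Assumption~\ref{assum:1}, where $[\sigma_a,\tau_b]=\sigma_a^{-1}\tau_b^{-1}\sigma_a\tau_b$ is used). Your side remark about the opposite commutator convention yielding $\sigma_{(a\cdot b)\gamma_b^{-1}}$ is also accurate.
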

   In order to give a bi-brace structure over $(M,+,\circ)$ (see Section~\ref{sec:1}), throughout the paper we will assume the following assumption. 
    \begin{assumption}\label{assum:1}
$T_+<\mathrm{Aff}(M,\circ)\cong N_{\mathrm{Sym(M)}}(T_\circ)$.
\end{assumption}

By Lemma \ref{lemma:lemma3} we get that $T_+$ normalises $T_\circ$ if and only if $\sigma_{a\cdot b}\in T_\circ$ for all $a,b\in M$. Indeed if $T_+$ normalises $T_\circ$, then we get $T_{\circ}^{\sigma{_a}}=T_\circ$ for all $\sigma_a\in T_+$, and
\begin{equation*}
\sigma_{a\cdot b}=\sigma_a^{-1}\tau_b^{-1}\sigma_a\tau_b\in T_\circ.
\end{equation*}
Vice-versa, if $\sigma_{a\cdot b}\in T_\circ$ for all $a,b\in M$, then
    \begin{equation*}
        T_\circ \ni \sigma_{a\cdot b}\tau_b^{-1}=\sigma_a^{-1}\tau_b^{-1}\sigma_a.
    \end{equation*}
    
    Moreover, $\sigma_{a\cdot b}\in T_\circ$, for all $a,b\in M$, if and only if $a\cdot b\cdot c=0$, for all $a,b,c\in M$. Indeed if $\sigma_{a\cdot b}\in T_\circ$ then
        \begin{align*}
           \sigma_{a\cdot b}\in T_\circ\cap T_+&=\lbrace \sigma_x\mid \tau_x=\sigma_x\rbrace\\
           &=\lbrace \sigma_x\mid \gamma_x\sigma_x=\sigma_x\rbrace\\
           &=\lbrace \sigma_x\mid \gamma_x-\mathbb{1}_V=0\rbrace\\
           &=\lbrace \sigma_x\mid \delta_x=0\rbrace\\
           &=\lbrace \sigma_x\mid x\in \ker(\delta)\rbrace.
        \end{align*}
Thus $\sigma_{a\cdot b}\in T_\circ$ if and only if $\delta_{a\cdot b}=0$ for all $a,b\in V$, which, by \eqref{eq:prodotto}, is equivalent to $a\cdot b\cdot c=0$ for all $a,b,c\in M$. 

We conclude  
\begin{lemma}\label{lemnil}
The translation group $T_+$ of $(M,+)$ normalises $T_\circ$ if and only if $(M,+,\cdot)$ is $3$-nilpotent, or, in other words, $M\cdot M\cdot M=0$.
\end{lemma}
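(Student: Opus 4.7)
The plan is to establish the biconditional via a two-step equivalence: (i) $T_+$ normalises $T_\circ$ iff $\sigma_{a\cdot b}\in T_\circ$ for all $a,b\in M$, and (ii) the latter iff $M\cdot M\cdot M=0$. The tools are already at hand: the commutator identity of Lemma~\ref{lemma:lemma3} and the unique decomposition $\tau_a=\gamma_a\sigma_a$ from~\eqref{dectau}.

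For step (i), I would observe that $T_+$ normalises $T_\circ$ exactly when every conjugate $\sigma_a^{-1}\tau_b\sigma_a$ lies in $T_\circ$. Rearranging $[\sigma_a,\tau_b]=\sigma_{a\cdot b}$ from Lemma~\ref{lemma:lemma3}, one finds that this conjugate differs from $\tau_b\in T_\circ$ by a translation of the form $\sigma_{\pm(a\cdot b)}$; hence the normalising condition collapses to $\sigma_{a\cdot b}\in T_\circ$ for every $a,b\in M$. For step (ii), since $T_\circ$ acts regularly on $M$, the unique element of $T_\circ$ mapping $0$ to $a\cdot b$ is $\tau_{a\cdot b}=\gamma_{a\cdot b}\sigma_{a\cdot b}$; so $\sigma_{a\cdot b}\in T_\circ$ forces $\gamma_{a\cdot b}=\mathbb{1}_M$, equivalently $\delta_{a\cdot b}=0$. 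By~\eqref{eq:prodotto} this means $c\cdot(a\cdot b)=0$ for every $c\in M$, and quantifying over $a,b$ yields precisely $M\cdot M\cdot M=0$, i.e.\ $3$-nilpotency of the commutative radical algebra provided by Theorem~\ref{thm:radalg}.

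I do not foresee any real obstacle here: once the two preceding ingredients are in place, the proof is essentially bookkeeping. The only minor point is to track the sign/inversion arising in the commutator identity, which is absorbed by working inside the group $T_+$, and to use commutativity of the product (Theorem~\ref{thm:radalg}) to pass freely between $c\cdot(a\cdot b)$ and $(a\cdot b)\cdot c$ whenever convenient.
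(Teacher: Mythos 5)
Your proposal is correct and follows essentially the same route as the paper: both reduce the normalising condition to $\sigma_{a\cdot b}\in T_\circ$ via the commutator identity of Lemma~\ref{lemma:lemma3}, and then identify $T_\circ\cap T_+$ with $\{\sigma_x\mid \delta_x=0\}$ (you phrase this through regularity and the uniqueness of $\tau_{a\cdot b}$, the paper through $\tau_x=\gamma_x\sigma_x=\sigma_x$, which is the same observation). No gaps; the argument matches the paper's.
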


Let us denote by $\Ann(M)$ the annihilator of the algebra $(M,+,\cdot)$. Since $M$ is $3$-nilpotent, then
\begin{equation}\label{MM in Ann}
    M\cdot M\subseteq \Ann(M).
\end{equation}

We conclude this section by defining isoclinism of \(R\)-algebras.

\begin{definition}\label{def:isoclinism}
    We shall say that two \(R\)-algebras \(A\) and \(B\) are isoclinic if there exist two isomorphisms
    \begin{equation*}
        \psi\colon A/\Ann(A) \to B/\Ann(B)\quad\text{and} \quad
        \phi \colon A\cdot A \to B\cdot B
    \end{equation*}
    such that the following diagram commutes
    \[
  \begin{tikzcd}
    A/\Ann(A)\times A/\Ann(A) \arrow{r}{\psi \times \psi} \arrow{d}{(\ \cdot \ )}& B/\Ann(B)\times B/\Ann(B)\arrow{d}{(\ \cdot \ )} \\
      A\cdot A \arrow{r}{\phi} &B\cdot B
  \end{tikzcd}
\]
Isoclinism is an equivalence relation among \(R\)-algebras. We say that an algebra \(S\) is \emph{stem}  if \(Ann(S)\subseteq S\cdot S\). For two isoclinic algebras $A$ and $B$ we shall write $A\sim B$.
\end{definition}
We notice that the relation of isoclinism of $R$-algebras is a particular case of the isoclinism of skew braces defined in \cite{MR4698318}. In according to P.Hall \cite{Hallpgroup}, we give the following characterization.
\begin{lemma}
    Let $A$ be an $R$-algebra, $B\subseteq A$ a subalgebra and $I\subseteq \Ann(A)$ an ideal of $A$ such that $I\cap (A\cdot A)= \{0\}$. Then
    \begin{itemize}
        \item $A\sim A\oplus T$, where $T$ is any algebra such that $T\cdot T=0$;
        \item $A\sim A/I$;
        \item $A\sim B$ if and only if $A=B+\Ann(A)$.
        
    \end{itemize}
\end{lemma}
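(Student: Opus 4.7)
The plan is to handle the three bullets in turn, in each case by exhibiting the candidate pair $(\psi,\phi)$ coming from the canonical quotient/inclusion maps and checking the commuting square of Definition~\ref{def:isoclinism}. The preliminary task in each case is to identify the annihilator and the derived subalgebra of the second algebra with the corresponding objects for $A$ (up to a natural quotient), so that the diagram becomes essentially tautological.

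For the first item, I expect $\Ann(A\oplus T)=\Ann(A)\oplus T$ and $(A\oplus T)\cdot(A\oplus T)=A\cdot A$, both of which follow at once from $T\cdot T=0$ and the fact that in a direct sum of $R$-algebras all cross products vanish. With these in hand, $\psi$ can be taken as the obvious isomorphism $(A\oplus T)/\Ann(A\oplus T)\to A/\Ann(A)$ and $\phi$ as the identity on $A\cdot A$, so that the square commutes by construction of the product on the direct sum.

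For the second item, the hypothesis $I\subseteq\Ann(A)$ already makes the product on $A/I$ well-defined. I would prove two equalities: (a) $\Ann(A/I)=\Ann(A)/I$, where the non-obvious inclusion uses $I\cap(A\cdot A)=\{0\}$ to upgrade $a\cdot x\in I$ (for all $x$) to $a\cdot x=0$; and (b) $(A/I)\cdot(A/I)=(A\cdot A+I)/I\cong A\cdot A$, where the last isomorphism is $(A\cdot A)/(A\cdot A\cap I)=A\cdot A$, again by the hypothesis on $I$. Taking $\psi$ and $\phi$ to be the natural isomorphisms then gives commutativity automatically.

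For the third item, I would treat the two implications separately. The direction $(\Leftarrow)$ is the cleaner one: assuming $A=B+\Ann(A)$, I would decompose $a=b+z$ with $b\in B$, $z\in\Ann(A)$ inside each product $a_1\cdot a_2$ to see that $A\cdot A=B\cdot B$, and use the same decomposition to show $\Ann(B)=B\cap\Ann(A)$. Consequently $B/\Ann(B)=B/(B\cap\Ann(A))\cong(B+\Ann(A))/\Ann(A)=A/\Ann(A)$, and both $\psi$ and $\phi$ can be taken as the inclusion-induced maps. I expect the converse $(\Rightarrow)$ to be the main obstacle, mirroring P.~Hall's classical argument for groups. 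My approach is to set $C\defeq B+\Ann(A)$ so that $B\subseteq C\subseteq A$ and, by the direction just proved, $C\sim B\sim A$; then, using $\Ann(A)\subseteq\Ann(C)$ together with the natural injection $C/\Ann(A)\hookrightarrow A/\Ann(A)$ and the surjection $C/\Ann(A)\twoheadrightarrow C/\Ann(C)$, I would invoke the $R$-module isomorphism $C/\Ann(C)\cong A/\Ann(A)$ supplied by the isoclinism and compare $R$-ranks to force equality $C=A$, i.e.\ $A=B+\Ann(A)$. The delicate point is that the isomorphism $\psi$ furnished by the isoclinism is abstract and need not be induced by inclusion; finitely-generated length/rank considerations (applicable in this paper's setting) are what ultimately close the argument.
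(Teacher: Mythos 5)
The paper states this lemma without proof (it only points to P.~Hall's classical argument), so there is no in-text proof to compare against; judged on its own terms, your treatment of the first two items and of the implication $(\Leftarrow)$ in the third is correct and complete in outline. You place the hypothesis $I\cap(A\cdot A)=\{0\}$ exactly where it is needed (to upgrade $a\cdot x\in I$ to $a\cdot x=0$, giving $\Ann(A/I)=\Ann(A)/I$, and to get $(A\cdot A+I)/I\cong A\cdot A$), and the identifications $\Ann(A\oplus T)=\Ann(A)\oplus T$, $(A\oplus T)\cdot(A\oplus T)=A\cdot A$, and, under $A=B+\Ann(A)$, $\Ann(B)=B\cap\Ann(A)$ and $A\cdot A=B\cdot B$, are all as they should be.

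The gap is in the implication $(\Rightarrow)$ of the third item, and it is not merely technical. Your plan is to deduce $C\defeq B+\Ann(A)=A$ from the abstract isomorphism $C/\Ann(C)\cong A/\Ann(A)$ by comparing ranks. Whenever $A/\Ann(A)$ and $A\cdot A$ have finite length this does work: $B\cdot B\subseteq A\cdot A$ with $\ell(B\cdot B)=\ell(A\cdot A)<\infty$ forces $B\cdot B=A\cdot A$, and $\ell\bigl(B/(B\cap\Ann(A))\bigr)\ge\ell\bigl(B/\Ann(B)\bigr)=\ell\bigl(A/\Ann(A)\bigr)\ge\ell\bigl((B+\Ann(A))/\Ann(A)\bigr)$ forces $B+\Ann(A)=A$. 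But rank alone does not close the argument: a proper submodule of $\mathbb{Z}_p^n$ can be abstractly isomorphic to $\mathbb{Z}_p^n$, so the injection $C/\Ann(A)\hookrightarrow A/\Ann(A)$ together with an abstract isomorphism $C/\Ann(C)\cong A/\Ann(A)$ does not yield equality. In fact the implication fails for torsion-free $\mathbb{Z}_p$-algebras: take $A=\mathbb{Z}_p e_1\oplus\mathbb{Z}_p e_2$ with $e_1\cdot e_1=e_2$ and $e_2\in\Ann(A)$, and $B=\mathbb{Z}_p(pe_1)\oplus\mathbb{Z}_p e_2$; then $\psi(\bar e_1)=\overline{pe_1}$ and $\phi(e_2)=p^2e_2$ exhibit $A\sim B$ in the sense of Definition~\ref{def:isoclinism}, yet $B+\Ann(A)=p\mathbb{Z}_p e_1\oplus\mathbb{Z}_p e_2\ne A$. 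So you should either restrict this item to the situation in which the relevant quotients have finite length (the torsion setting where the paper actually uses isoclinism), phrasing your counting argument with lengths rather than ranks, or strengthen the hypothesis so that $\psi$ and $\phi$ are the maps induced by the inclusion $B\hookrightarrow A$.
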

For a proof of the following result see \cite[Theorem 2.18]{MR4698318}.
\begin{theorem}
    Every $R$-algebra is isoclinic to a stem algebra.
\end{theorem}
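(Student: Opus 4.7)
The plan is to construct the desired stem algebra as a quotient $A/I$ where $I$ is a carefully chosen ideal sitting inside $\Ann(A)$. Set $J := \Ann(A) \cap (A\cdot A)$. The first step is to produce an $R$-submodule $I$ of $\Ann(A)$ complementing $J$, so that $\Ann(A) = J \oplus I$; this splitting exists under the module-theoretic hypotheses of the paper. Because $I \subseteq \Ann(A)$, we have $I \cdot A = 0$, which makes $I$ automatically an ideal of $A$, and by construction $I \cap (A \cdot A) \subseteq I \cap J = \{0\}$.

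The second step is immediate from the preceding lemma: the conditions $I \subseteq \Ann(A)$ and $I \cap (A\cdot A) = \{0\}$ are precisely what is required to conclude $A \sim A/I$.

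The third step is to verify that $A/I$ is stem, namely that $\Ann(A/I) \subseteq (A/I)\cdot(A/I)$. A short check shows $\Ann(A/I) = \Ann(A)/I$: if $\bar x$ annihilates $A/I$ then $x \cdot A \subseteq I$, but $x \cdot A \subseteq A \cdot A$ and $I \cap (A \cdot A) = \{0\}$, forcing $x \in \Ann(A)$. Using the splitting $\Ann(A) = J \oplus I$ together with $J \subseteq A \cdot A$, we obtain
\[
\Ann(A/I) \;=\; (J \oplus I)/I \;\cong\; J \;\subseteq\; (A \cdot A + I)/I \;=\; (A/I)\cdot(A/I),
\]
which shows that $A/I$ is stem.

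The main obstacle, I expect, is the first step: the existence of an $R$-module complement $I$ of $J$ inside $\Ann(A)$. Over a field this is automatic, but over $R = \mathbb{Z}_p$ one must argue that $J$ is a direct summand of $\Ann(A)$, typically by appealing to the structure theorem for finitely generated modules over the discrete valuation ring $\mathbb{Z}_p$ and reducing to pure submodules, possibly after first replacing $A$ by a quotient with respect to a $\mathbb{Z}_p$-torsion piece disjoint from $A \cdot A$. This is where the translation of P.~Hall's classical construction for $p$-groups to the $R$-algebra setting requires the most care.
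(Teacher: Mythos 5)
Your steps 2 and 3 are fine \emph{conditional} on step 1, but step 1 is a genuine gap, and it is not a repairable one: over $R=\mathbb{Z}_p$ the submodule $J=\Ann(A)\cap(A\cdot A)$ need not be a direct summand of $\Ann(A)$, and worse, no quotient $A/I$ of the kind you allow need be stem. Concretely, take $A=\mathbb{Z}_p e_1\oplus\mathbb{Z}_p e_2$ with $e_1\cdot e_1=pe_2$ and all other products zero. Then $\Ann(A)=\mathbb{Z}_p e_2$ and $A\cdot A=p\mathbb{Z}_p e_2$, so $J=p\mathbb{Z}_p e_2$, which has no complement in the indecomposable module $\mathbb{Z}_p e_2$; moreover every nonzero submodule of $\Ann(A)$ meets $A\cdot A$ nontrivially, so the only admissible ideal is $I=0$ and $A/I=A$ is not stem. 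The same obstruction occurs in the torsion case ($\mathbb{Z}/p^2\mathbb{Z}\supseteq p\mathbb{Z}/p^2\mathbb{Z}$). So the strategy ``stem $=$ quotient of $A$ by a complement of $J$ in $\Ann(A)$'' cannot work in general; the stem representative typically lives outside the set of quotients of $A$.

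The paper takes a different route: it cites \cite[Theorem 2.18]{MR4698318} for the general statement, and the construction it actually uses (Definition~\ref{stemalgebra}) builds a \emph{new} algebra $\Stem(b)=\bigl(A/\Ann(A)\bigr)\oplus(A\cdot A)$ with product $(m_1,n_1)\cdot(m_2,n_2)=(0,b(m_1,m_2))$, where $b$ is the form induced by the product of $A$ (well defined in the $3$-nilpotent setting of the paper, since $\Ann(A)\cdot A=0$). One checks directly that $\Ann(\Stem(b))=\{0\}\oplus(A\cdot A)=\Stem(b)\cdot\Stem(b)$, so $\Stem(b)$ is stem, and that the identity maps on $A/\Ann(A)$ and $A\cdot A$ give an isoclinism $A\sim\Stem(b)$. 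In the example above this produces $f_1\cdot f_1=f_2$, which is isoclinic to $A$ but is not a quotient of $A$. If you want to salvage your write-up, replace the quotient construction by this external construction; the splitting you were worried about is then never needed.
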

\begin{definition}\label{stemalgebra}
Let $M$ and $N$ be two $R$-modules and let $b\colon M\times M\to N$ be a $R$-bilinear form such that $N=\langle b(M,M)\rangle$. We define the $R$-algebra $$\Stem(b)=\left(M/\Ann(M)\right)\oplus N$$ endowed with the product given by $(m_1,n_1)\cdot (m_2,n_2)=(0,b(m_1,m_2))$.
\end{definition}
We point out that $\Stem(b)$ is a $3$-nilpotent algebra. In case of $(M,+,\cdot)$ is a $3$-nilpotent algebra then $N=M\cdot M$ and $b$ is the bilinear form associated to the product.

\section{Torsion-free module braces over \texorpdfstring{$\mathbb{Z}_p$}{Zp}}\label{sec:p}
In this section we deal with the torsion-free case giving a classification up to isomorphisms. Using the same notation of the previous section we consider $p$ an odd prime and 
$$M=\bigoplus_{i=1}^n \mathbb{Z}_p$$
satisfyning Assumption \ref{assum:1}. Furthermore we introduce the following assumption which allows us to study the product in \(M\) as a standard symmetric bilinear form. 
\begin{assumption}\label{assumption: MM uno}
    $M\cdot M$ is a cyclic \(\mathbb{Z}_p\)-module. 
\end{assumption}
Since $\mathbb{Z}_p$ is a PID, notice that if $M \cdot M$ is a cyclic submodule of the free $\mathbb{Z}_p$-module $M$, then $M\cdot M \cong\mathbb{Z}_p$ as a $\mathbb{Z}_p$-module.
Let us define the symmetric bilinear form  
\begin{equation}\label{eq:bilinearform}
    b:M\times M\longmapsto \Ann(M) ,\ (a,b)\mapsto a\cdot b.
\end{equation}
which turns \(M\) into a lattice (see Section~\ref{sec:2}).
%and we denote by $q$ the quadratic form associated to $b$ defined on the vector space $V=\mathbb{Q}_p^n=M\otimes \mathbb{Q}_p$. Since $M$ is a finitely generated $\mathbb{Z}_p$-module, it is a lattice in the quadratic space $(V,q)$ (see Section~\ref{sec:2}).
Notice that the radical $\mathrm{Rad}(M)=\lbrace a\in M:\ b(a,M)=0\rbrace$ of the bilinear form $b$  coincides with $\Ann(M)$. Moreover, $\mathrm{Rad}(M)$ is a submodule of the free module $M$,  so it is also free, say of rank $d$. By Proposition \ref{proposition:splitting}, we can find a basis for $M$ such that $\mathrm{Rad}(M)$ is spanned by the last $d$ vectors of that basis, i.e.
\begin{equation}\label{eq:spansocle}
   \mathrm{Rad}(M)=\mathrm{span}_{\mathbb{Z}_p}\lbrace e_{m+1},\dots,e_n\rbrace,\quad m\defeq n-d.
\end{equation}

The next result relates the matrix associated to the bilinear form $b$ with the linear maps described in~\eqref{dectau}.
\begin{lemma}\label{lemma:formagamma}
There exists a $m\times d$ matrix $\Theta_i$ with entries in $\mathbb{Z}_p$ such that $\gamma_{e_i}$ has the form
        \begin{equation*}
            \gamma_{e_i}=
            \begin{pmatrix}
                \mathbb{1}_{m} & \Theta_i\\
               0_{d\times m} & \mathbb{1}_{d} 
            \end{pmatrix}\quad \text{for } i=1,\dots,m.
        \end{equation*}
Moreover $\Theta_i=0_{m\times d}$ for $i=m+1,\dots,n$.
\end{lemma}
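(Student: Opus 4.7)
The plan is to unpack $\gamma_{e_i}$ via the identity $\gamma_{e_i}=\mathbb{1}_M+\delta_{e_i}$ and to read off the block shape directly from $3$-nilpotency (Lemma~\ref{lemnil}), the choice of basis in \eqref{eq:spansocle}, and the right-action convention $a\delta_b=a\cdot b$ recorded in~\eqref{eq:prodotto}.

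First I would locate the image of $\delta_{e_i}$. By~\eqref{MM in Ann}, $\operatorname{Im}(\delta_{e_i})\subseteq M\cdot M\subseteq \Ann(M)$. Since the radical of the bilinear form $b$ defined in~\eqref{eq:bilinearform} coincides with $\Ann(M)$, the basis choice \eqref{eq:spansocle} gives $\operatorname{Im}(\delta_{e_i})\subseteq \operatorname{span}_{\mathbb{Z}_p}\{e_{m+1},\dots,e_n\}$. In matrix terms (rows on which $\delta_{e_i}$ acts from the right) this forces the first $m$ entries of every row of the matrix of $\delta_{e_i}$ to vanish. Next, for $j>m$ we have $e_j\in\Ann(M)$ and hence $e_j\delta_{e_i}=e_j\cdot e_i=0$, so the last $d$ rows of the matrix of $\delta_{e_i}$ are zero as well. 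Combining the two constraints,
\begin{equation*}
\delta_{e_i}=\begin{pmatrix}0_{m\times m}&\Theta_i\\ 0_{d\times m}&0_{d\times d}\end{pmatrix},\qquad \Theta_i\in \mathbb{Z}_p^{m\times d},
\end{equation*}
whose $(j,k)$-entry is the coefficient of $e_{m+k}$ in the expansion of $e_j\cdot e_i$ along the basis of $\Ann(M)$. Adding $\mathbb{1}_M$ yields precisely the block form claimed for $\gamma_{e_i}$ when $i\le m$.

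For the ``moreover'' part, if $i>m$ then $e_i\in\Ann(M)$, so $a\cdot e_i=0$ for every $a\in M$; equivalently $\delta_{e_i}=0$, hence $\gamma_{e_i}=\mathbb{1}_M$ and $\Theta_i=0_{m\times d}$. The whole argument is essentially a dictionary between the radical/annihilator data of $(M,+,\cdot)$ and the chosen block decomposition, so I do not foresee any conceptual obstacle beyond keeping the row/column conventions consistent with~\eqref{eq:prodotto}.
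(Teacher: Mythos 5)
Your argument is correct and is essentially the paper's own proof in different notation: where you work with $\delta_{e_i}=\gamma_{e_i}-\mathbb{1}_M$ and locate its image in $\Ann(M)$ and its kernel containing $\Ann(M)$, the paper reads the same two constraints off the identities $e_j\circ e_i=e_j\gamma_{e_i}+e_i=e_j+e_i$ for $j>m$ and $e_j\circ e_i=e_j+e_i+e_j\cdot e_i$ for $j\le m$, which is the same computation via $a\circ b=a+b+a\cdot b$. No gap; the row/column bookkeeping under the right-action convention is handled correctly.
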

\begin{proof}
    Notice that $a\in \Ann(V)$ if and only if $a\circ b=a+b$ for all $b\in M$. It follows that $e_j\circ e_i=e_j+e_i$ for $j=m+1,\dots,n$ and $i=1,\dots, n$. In other words
\begin{equation}\label{1}
    e_j\circ e_i=e_j\gamma_{e_i}+e_i=e_j+e_i.
\end{equation}
On the other hand for $j=1,\dots,m$ and $i=1,\dots, m$
\begin{equation}\label{2}
   e_j\circ e_i=e_j\gamma_{e_i}+e_i=e_j+e_i+e_j\cdot e_i
\end{equation}
where $e_j\cdot e_i\in \Ann(V)=\mathrm{span}\{e_{m+1},\dots,e_n\}$. Thus, by Equation \eqref{1} and \eqref{2} we obtain 
\begin{equation*}
    \gamma_{e_i}=\begin{pmatrix}
        \mathbb{1}_m& \Theta_{e_i}\\
        0&\mathbb{1}_d
    \end{pmatrix}\ \text{for }i=1,\dots,m
\end{equation*}
and $\gamma_{e_i}=\mathbb{1}_n$ for $i=m+1,\dots,n$. Moreover, denoting by $\Theta_{i,j}$ the $j$-$th$ row of the matrix $\Theta_i$, we get that $e_i\cdot e_j=(\underbrace{0,\dots,0}_m,\Theta_{i,j})$. Finally
\begin{equation*}
\gamma_a=\begin{pmatrix}
    \mathbb{1}_m& a_1\Theta_{e_1}+\dots+a_m\Theta_{e_m}\\
    0&\mathbb{1}_d
\end{pmatrix}
\end{equation*}
for every $a=a_1e_1+\dots+a_ne_n\in M$.\qedhere
\end{proof}
\begin{comment}
    \begin{proof}
By Equation \eqref{eq:prodotto} we get that 
\begin{equation*}
    a\circ b=a+b+a\cdot b
\end{equation*}
for each $a,b\in M$. Thus $a\in \mathrm{Rad}(M)$ if and only if $a\circ b=a+b$ for all $b\in M$.\\
Let 
\begin{equation*}
    \gamma_{e_i}=\begin{pmatrix}
                A_i & \Theta_i\\
                C_i & D_i
            \end{pmatrix}
\end{equation*}
and notice that, by Equation \eqref{eq:spansocle}, $e_j\circ e_i=e_j+e_i$ for $j=m+1,\dots,n$ and $i=1,\dots, n$. Then we have
\begin{equation*}
    e_j\circ e_i=e_j\gamma_{e_i}+e_i=e_j\begin{pmatrix}
                A_i & \Theta_i\\
                C_i & D_i
            \end{pmatrix}+e_i=e_j+e_i
\end{equation*}
and so $D_i=\mathbb{1}_d$ and $C_i=0_{d\times m}$. On the other hand since $e_j\circ e_i=e_i\circ e_j$ we obtain $A_i=\mathbb{1}_m$ for $i=1,\dots,n$ and $\Theta_j=0_{m\times d}$ for $j=m+1,\dots,n$.

For $i,j\in\lbrace1,\dots,m\rbrace$
\begin{equation}\label{eq:eqTheta}
    e_i\cdot e_j=e_i \gamma_j-e_i=e_i\begin{pmatrix}
        \mathbb{1}_m & \Theta_j\\
        0_{d\times m}&\mathbb{1}_d
    \end{pmatrix}-e_i,
\end{equation}
thus, denoting by $\Theta_{i,j}$ the $j$-$th$ row of the matrix $\Theta_i$, we find $e_i\cdot e_j=(\underbrace{0,\dots,0}_m,\Theta_{i,j})$.
\end{proof}
\end{comment}
\begin{remark}\label{Ann=MM}
By Equation \eqref{MM in Ann}, $\Ann(M)=H\oplus K$, where \(K\) is a cyclic \( \mathbb{Z}_p \)-module containing \(M\cdot M\).
 Up to consider the quotient algebra $(M/H,+,\cdot)$, we can assume $M\cdot M=c\cdot \Ann(M)$, with \(c\in \mathbb{Z}_p \) and $\Ann(M)=\mathrm{span}\lbrace e_n\rbrace $ is the submodule spanned by the last vector of the basis. Therefore, without lost of generality, from now on we will consider $m=n-1$,  $M\cdot M=c\cdot \mathrm{span}\lbrace e_n \rbrace$ and $M=N\oplus \Ann(M)$ where $N=\mathrm{span}\lbrace e_1,\dots,e_{n-1}\rbrace$.
\end{remark}
\begin{comment}
    The matrix $\Theta_i$, $i=1,\dots,n-1$, can be seen as column vectors with $n-1$ entries in $\mathbb{Z}_p$. Moreover, since the bilinear form $b$ is defined in \eqref{eq:bilinearform} via the product of $M$, we notice that the $n\times n$ matrix 
\begin{equation*}
\left(\begin{array}{ccccc}
\Theta_{1,1}&\Theta_{1,2}&\cdots&\Theta_{1,n-1}&0\\
\Theta_{2,1}&\Theta_{2,2}&\cdots&\Theta_{2,n-1}&0\\
\vdots &\vdots &\ddots &\vdots&\vdots\\
\Theta_{n-1,1}&\Theta_{n-1,2}&\cdots&\Theta_{n-1,n-1}&0\\
0&0&\cdots&0&0
\end{array}\right)
\end{equation*}
is just the matrix associated to  $b$ with respect to the given basis. 
\end{comment} 
\begin{definition}\label{defmat1}
We shall say that the matrix $\Theta\defeq[\Theta_1\ldots \Theta_{n-1}]$ is the \emph{defining matrix} in the given basis of the commutative, torsion-free and $3$-nilpotent $\mathbb{Z}_p$-algebra $(M,+,\cdot)$ with $\Ann(M)$ cyclic.
\end{definition}
We notice that the matrix $\Theta=[\Theta_1\ldots \Theta_{n-1}]$ is the matrix associated to the symmetric bilinear form $b$ restricted on the submodule $N$ defined in Remark \ref{Ann=MM}.
\begin{lemma}
 The matrix $\Theta$ is symmetric of maximal rank.
\end{lemma}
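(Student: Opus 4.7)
The plan is to verify the two properties separately, both from the setup of Remark \ref{Ann=MM}: the reduction to $m=n-1$ with $\Ann(M)=\mathrm{span}\{e_n\}$ and $M=N\oplus \Ann(M)$, together with Lemma \ref{lemma:formagamma}.

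For symmetry, I would simply invoke commutativity of the product. From the proof of Lemma \ref{lemma:formagamma} we have $e_i\cdot e_j=(0,\dots,0,\Theta_{j,i})$ where $\Theta_{j,i}$ is the entry of the vector $\Theta_j$ in row $i$, i.e.\ the $(i,j)$-entry of $\Theta$. Since $(M,+,\cdot)$ is commutative, $e_i\cdot e_j=e_j\cdot e_i$ forces $\Theta_{j,i}=\Theta_{i,j}$, which is precisely $\Theta^{tr}=\Theta$. (Equivalently, $\Theta$ is the Gram matrix of the symmetric form $b$ restricted to $N$ in the basis $e_1,\dots,e_{n-1}$, and symmetric bilinear forms have symmetric Gram matrices.)

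For maximal rank, the key step is to show that the restriction $b|_{N\times N}$ has trivial radical. Since $b$ coincides with the algebra product, we have $\mathrm{Rad}(M)=\Ann(M)=\mathrm{span}\{e_n\}$ by the reduction in Remark \ref{Ann=MM}. Now take $x\in N$ with $b(x,y)=0$ for every $y\in N$; for $y\in\Ann(M)=\mathrm{span}\{e_n\}$ we also have $b(x,y)=x\cdot y=0$, so $b(x,M)=0$ and hence $x\in\mathrm{Rad}(M)\cap N=\mathrm{span}\{e_n\}\cap N=\{0\}$. Thus the radical of $b|_{N\times N}$ is trivial.

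The remaining step is to upgrade ``trivial radical'' to ``maximal rank'' for the $(n-1)\times(n-1)$ matrix $\Theta$ over $\mathbb{Z}_p$. This is where the torsion-freeness enters: if $\det(\Theta)=0$ in $\mathbb{Z}_p$, there exists a nonzero vector $v\in\mathbb{Q}_p^{\,n-1}$ with $v^{tr}\Theta=0$, and after clearing denominators we obtain a nonzero $x\in N$ with $x^{tr}\Theta=0$, i.e.\ $b(x,N)=0$, contradicting the previous paragraph. Hence $\det(\Theta)\neq 0$, which is the maximal rank assertion. I do not expect any serious obstacle here; the only delicate point is making the passage from non-degeneracy over $\mathbb{Z}_p$ to nonvanishing of the determinant cleanly, which is automatic since $M$ is free and hence torsion-free.
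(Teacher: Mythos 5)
Your proof is correct and follows essentially the same route as the paper: symmetry comes from commutativity of the product, and maximal rank comes from the fact that an element of $N$ orthogonal to everything lies in $\Ann(M)\cap N=\{0\}$ (the paper phrases this as a $\mathbb{Z}_p$-linear dependence among the columns $\Theta_i$ forcing $\gamma_a=\mathbb{1}_n$, which is the same statement). Your final step making explicit the passage from $\mathbb{Z}_p$-linear independence to $\det(\Theta)\neq 0$ by clearing denominators is left implicit in the paper but is a worthwhile clarification.
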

\begin{proof}
     Since the algebra $(M,+,\cdot)$ is commutative, $e_i\cdot e_j=e_j\cdot e_i$ for  every $i,j\in \lbrace 1,\dots,n\rbrace$, and by Lemma \ref{lemma:formagamma}
\begin{equation*}
    e_i\cdot e_j=(0,\dots,0,\Theta_{i,j})=(0,\dots,0,\Theta_{j,i})=e_j\cdot e_i.
\end{equation*}
It follows that the matrix $\Theta$ is symmetric.\\
Let us suppose that a non trivial $\mathbb{Z}_p$-linear combination of the column vectors $\Theta_1,\dots,\Theta_{n-1}$ is the null vector, i.e.
\begin{equation*}
           \displaystyle\sum_{i=1}^{n-1} a_i\Theta_i =0\quad \ a_i\in \mathbb{Z}_p\ \text{and } a_s\neq 0\text{ for some }s\in \lbrace 1,\dots,n-1\rbrace.
       \end{equation*}

        Then 
        \begin{equation*}
            \gamma_{a_1 e_1+\dots+a_{n-1} e_{n-1}}=
            \begin{pmatrix}
                \mathbb{1}_{n-1} & a_1 \Theta_1+\dots+a_{n-1} \Theta_{n-1}\\
                0 & 1
            \end{pmatrix}
            =\mathbb{1}_n
        \end{equation*}
This means that $a_1 e_1+\dots+a_{n-1} e_{n-1}\in\Ann(M)\cap N=\{0\}$.
\end{proof}

Conversely we have the following result.

\begin{theorem}\label{thm:theoremtheta}
Any $(n-1)\times (n-1)$ symmetric matrix with coefficients in $\mathbb{Z}_p$  of maximal rank 
%\begin{equation*}
   % \Theta=[\Theta_1\dots \Theta_{n-1}]
%\end{equation*}
%with entries in $\mathbb{Z}_p$ 
is the defining matrix (with respect to a suitable basis) of a commutative, torsion-free and $3$-nilpotent $\mathbb{Z}_p$-algebra with $\Ann(M)$ cyclic.%$(M,+,\cdot)$ with $\Ann(M)$ of dimension $1$ %if and only if
%\begin{enumerate}
  %  \item $\Theta$ is symmetric;
   % \item $\Theta$ has maximal rank.
%\end{enumerate}
\end{theorem}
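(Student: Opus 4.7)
The plan is to reverse-engineer the construction suggested by Lemma~\ref{lemma:formagamma}, producing an algebra whose defining matrix is the prescribed $\Theta$. Take $M=\bigoplus_{i=1}^n \mathbb{Z}_p$ with canonical basis $e_1,\dots,e_n$, and define a product $\cdot$ on $M$ by $\mathbb{Z}_p$-bilinearly extending
\[
e_i\cdot e_j\defeq \Theta_{i,j}\,e_n\quad\text{for }i,j\in\{1,\dots,n-1\},\qquad e_i\cdot e_n\defeq 0\quad\text{for }i\in\{1,\dots,n\}.
\]
Commutativity of this product is immediate from the symmetry of $\Theta$. Because $M\cdot M\subseteq \mathbb{Z}_p e_n$ and $e_n$ annihilates every basis vector, $M\cdot M\cdot M=0$, so the algebra is $3$-nilpotent; the same observation shows that $(a\cdot b)\cdot c=0=a\cdot(b\cdot c)$ for all $a,b,c\in M$, giving associativity for free. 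Torsion-freeness is automatic since $M$ is a free $\mathbb{Z}_p$-module.

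Next I would identify the annihilator. For $a=\sum_{i=1}^n a_i e_i$, the condition $a\cdot e_j=0$ for all $j\in\{1,\dots,n-1\}$ reads
\[
\sum_{i=1}^{n-1} a_i\,\Theta_{i,j}=0\quad\text{for every }j\in\{1,\dots,n-1\},
\]
i.e.\ $(a_1,\dots,a_{n-1})\Theta=0$. Since $\Theta$ has maximal rank over $\mathbb{Z}_p$ (equivalently, over $\mathbb{Q}_p$), this forces $a_1=\cdots=a_{n-1}=0$. Hence $\Ann(M)=\mathbb{Z}_p e_n$ is cyclic, and in particular $M\cdot M\subseteq\Ann(M)=\mathbb{Z}_p e_n$ as required by Remark~\ref{Ann=MM}.

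To conclude, I would pass to the corresponding brace via Theorem~\ref{thm:radalg}: the operation $a\circ b\defeq a+b+a\cdot b$ endows $M$ with a bi-brace structure with gamma function $\gamma_a\colon x\mapsto x+x\cdot a$, which is $\mathbb{Z}_p$-linear and invertible (the inverse being a finite sum by $3$-nilpotency). Assumption~\ref{assum:1} is satisfied thanks to Lemma~\ref{lemnil} and the $3$-nilpotency established above. A direct computation shows that each $\gamma_{e_j}$ has exactly the shape prescribed by Lemma~\ref{lemma:formagamma}, with upper-right column equal to the $j$-th column of $\Theta$; thus $\Theta$ is the defining matrix of $(M,+,\cdot)$.

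The verifications are essentially routine; the only place where a hypothesis is genuinely used is the maximal-rank condition on $\Theta$, which is precisely what forces $\Ann(M)$ to be the cyclic module $\mathbb{Z}_p e_n$ rather than strictly larger, and hence makes the construction compatible with the normalisation of Remark~\ref{Ann=MM}. There is no substantive obstacle beyond bookkeeping.
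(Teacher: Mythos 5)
Your proof is correct, and it arrives at the same construction as the paper from the opposite side of the brace--radical-ring dictionary. The paper's proof defines the maps $\gamma_a$ (exactly your $\mathbb{1}+\delta_a$), forms $T_\circ=\{\gamma_a\sigma_a\}$, and verifies by hand that $T_\circ$ is an abelian regular subgroup of $\mathrm{Aff}(M)$ normalized by $T_+$ (closure, explicit $\circ$-inverses obtained by solving $a\circ b=0$, regularity, the conjugation computation), and only then invokes Theorem~\ref{thm:radalg} to obtain the algebra. You instead define the product directly, check the ring axioms --- which are nearly trivial here since all triple products vanish --- and pass to the brace afterwards; this is more economical, because associativity and the existence of $\circ$-inverses come for free from $3$-nilpotency. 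One point where your write-up is actually more complete than the paper's: you verify explicitly that the maximal-rank hypothesis forces $\Ann(M)=\mathbb{Z}_p e_n$, i.e.\ that the annihilator is cyclic and no larger, which is part of the statement but is left implicit in the paper's proof. The only mild caveat is that Theorem~\ref{thm:radalg} as stated goes from the regular subgroup to the algebra, so for the direction you need (commutative radical ring $\Rightarrow$ abelian regular subgroup of $\mathrm{Aff}(M)$ normalized by $T_+$) you should instead cite the bi-brace/radical-ring correspondence recalled in Section~\ref{sec:1}, together with Lemma~\ref{lemnil} and the $\mathbb{Z}_p$-linearity and invertibility of $\gamma_a=\mathbb{1}+\delta_a$; all of these are available in the paper, so this is a matter of citation rather than a gap.
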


\begin{proof} 
Let $\Theta=[\Theta_1,\ldots,\Theta_{n-1}]$ be a  $(n-1)\times (n-1)$ symmetric matrix with coefficients in $\mathbb{Z}_p$ of maximal rank  and let $(M,+)$ be a free $\mathbb{Z}_p$-module of rank $n$. For $a=a_1 e_1+\cdots +a_n e_n\in M$, let us  define the map $\tau_a=\gamma_a\sigma_a$ where
\begin{equation*}
    \gamma_a=\begin{pmatrix}
        \mathbb{1}_{n-1}&a_1\Theta_1+\cdots +a_{n-1}\Theta_{n-1}\\
        0 &1
    \end{pmatrix},
\end{equation*}
and $\sigma_a$ is a translation of $(M,+)$. 
Set $T_\circ=\lbrace \tau_a:\  a\in M\rbrace < \mathrm{Aff}(M)$, where $\circ$ is the operation induced by $T_\circ$ on $M$ as in Equation \eqref{eq:circoperation}.

\noindent Let $\cdot$ be as in \eqref{eq:prodotto}, in order to prove that $(M,+,\cdot)$ is a commutative and $3$-nilpotent algebra, by Theorem \ref{thm:radalg} it is enough to show that $T_\circ$ is an abelian regular subgroup of $\mathrm{Aff}(M)$ normalized by $T_{+}$.
\begin{itemize}
    \item $T_\circ$ is a group indeed $\tau_0=\mathbb{1}_M$ is the neutral element and $\circ$ is associative by definition. Notice that $\tau_a\tau_b=\tau_{a\circ b}$, indeed 
    \begin{equation*}
        x\tau_a\tau_b=x\gamma_a\gamma_b+a\gamma_b+b=x\gamma_a\gamma_b+a\circ b
    \end{equation*}
    and since $a\circ b=a+b+a\cdot b$ and $\gamma_{a\cdot b}=\mathbb{1}_n$ 
    \begin{equation*}
        x\tau_{a\circ b}=x\gamma_{a\circ b}+a\circ b=x\gamma_{a+b}+a\circ b=x\gamma_a\gamma_b+a\circ b.
    \end{equation*}
    Thus, to prove that every map of $T_\circ$ admits an inverse, we just have to prove that each $a\in M$ admits an inverse with respect to $\circ$ operation.
    In other words, if $a=(a_1,\dots,a_n)\in M$, then we have to find $b\in M$ such that $a\circ b=0$, i.e $a+b+a\cdot b=a\circ b=0$. By Remark \ref{Ann=MM}, $M\cdot M$ has rank 1 and is spanned by $e_n$, so $b=(-a_1,\dots,-a_{n-1},b_n)$. It remains to find the last component $b_n$, that is
    \begin{align*}
        b_n e_n&=-(a_1,\dots,a_n)\cdot (-a_1,\dots,-a_{n-1},b_n)-a_n e_n\\
        &=((a_1,\dots,a_{n-1})\Theta (a_1,\dots,a_{n-1})^{tr}-a_n)e_n.
    \end{align*}
\item $T_\circ$ is abelian, indeed by definition $a\circ b=b\circ a$ and we have just seen that $\tau_a\tau_b=\tau_{a\circ b}$.
%\begin{align*}
    %x\tau_a\tau_b&=x\gamma_a\sigma_a\gamma_b\sigma_b\\
   % &=x\gamma_a\gamma_b+a\circ b\\
    %&=x\gamma_b\gamma_a+b\circ a\\
    %&=x\tau_b\tau_a.
%\end{align*}
\item $T_\circ$ is regular, i.e. for all $ a,b\in M$ there exists a unique $c\in M$ such that $a\tau_c=b$. For proving this, it is enough to observe that $c=b\tau_a^{-1}$, where the uniqueness of $c$ follows from the uniqueness of the inverse in $T_\circ$.
\item $T_+$ normalises $T_\circ$, i.e., for all $a,b\in M$, $\sigma_a \tau_b\sigma_a^{-1}\in T_\circ$. Indeed, using the equality $\gamma_a\gamma_b=\gamma_{a\gamma_b+b}$, it follows that
\begin{align*}
    x\sigma_a\tau_b\sigma_{-a}&=(x+a) \gamma_b\sigma_b\sigma_{-a}\\
    &=x\gamma_b+a\gamma_b+b-a \\
    &=x\gamma_a\gamma_b\gamma_{-a}+a\gamma_b+b-a\\
    &=x\tau_{a\gamma_b+ b-a}. \qedhere
\end{align*}
\end{itemize}
\end{proof}

Since the algebras appearing in the previous theorem will be often considered in the rest of this section we give a formal definition.  

%\begin{definition}\label{def:special_circle_agebra}
 %   A special circle \(\mathbb{Z}_p\)-algebra $M$ is a commutative, torsion-free and $3$-nilpotent $\mathbb{Z}_p$-algebra with $\Ann(M)$ cyclic.
  %  We will denote by \(\Theta_M\) the defining matrix of \(M\) as defined in \ref{defmat1}. 
%\end{definition}

%A straight consequence of Theorem~\ref{thm:theoremtheta} is the following result which gives a relationship between isomorphism classes of special circle $\mathbb{Z}_p$-algebras and congruence classes of the associated bilinear forms.

A straight consequence of Theorem~\ref{thm:theoremtheta} is the following result which gives a relationship between isomorphism classes of commutative, torsion-free and $3$-nilpotent $\mathbb{Z}_p$-algebras with cyclic annihilator and congruence classes of the associated bilinear forms.

\begin{proposition}\label{proposition:isomorphismmatrices}
    %Two special circle 
    Two commutative, torsion-free and $3$-nilpotent $\mathbb{Z}_p$-algebras  $M_1=(M,+,\cdot_1)$ and $~M_2=(M,+,\cdot_2)$ with cyclic annihilators and with defining matrices $\Theta_{M_1}$ and $\Theta_{M_2}$, respectively, are isomorphic if and only if there exists a unimodular $(n-1)\times (n-1)$ matrix $A$ such that
    \begin{equation*}
        A\Theta_{M_1} A^{tr}=\varepsilon\Theta_{M_2}
    \end{equation*}
    where $\varepsilon\in \lbrace 1,q\rbrace$ and $q$ is a non-square element of  $\mathbb{Z}_p$.
\end{proposition}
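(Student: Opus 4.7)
The strategy is to translate an algebra isomorphism into a change of basis and read off the induced congruence of the defining bilinear forms. Fix bases as in Remark~\ref{Ann=MM} so that $\Ann(M_i)=\operatorname{span}\{e_n^{(i)}\}$ for $i=1,2$. Since any algebra isomorphism $\phi\colon M_1\to M_2$ must preserve the annihilator, one has $\phi(e_n^{(1)})=\lambda\,e_n^{(2)}$ for some $\lambda\in\mathbb{Z}_p^\times$. Using the row-vector convention implicit in Lemma~\ref{lemma:formagamma}, this forces the matrix $\Phi$ representing $\phi$ (via $\phi(a)=a\Phi$) to have block-triangular form
\[
\Phi=\begin{pmatrix}A & v\\ 0 & \lambda\end{pmatrix},
\]
with $A$ a unimodular $(n-1)\times(n-1)$ matrix and $v\in\mathbb{Z}_p^{n-1}$.

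The next step is to evaluate $\phi(e_i\cdot_1 e_j)$ in two ways for $i,j\le n-1$. On one side it equals $\lambda\,\Theta_{M_1,i,j}\,e_n^{(2)}$; on the other, expanding $\phi(e_i)\cdot_2\phi(e_j)$ and using $e_n^{(2)}\in\Ann(M_2)$ to annihilate the contributions involving $v$ gives $(A\,\Theta_{M_2}\,A^{tr})_{i,j}\,e_n^{(2)}$. Equating the two yields $A\,\Theta_{M_2}\,A^{tr}=\lambda\,\Theta_{M_1}$, which upon setting $B=A^{-1}$ rearranges to
\[
B\,\Theta_{M_1}\,B^{tr}=\lambda^{-1}\,\Theta_{M_2}.
\]

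To reach the normalized statement, we reduce the unit scalar to $\{1,q\}$: since $p$ is odd, $\mathbb{Z}_p^\times/\mathbb{Z}_p^{\times 2}=\{1,q\}$, so $\lambda^{-1}=u^2\varepsilon$ with $u\in\mathbb{Z}_p^\times$ and $\varepsilon\in\{1,q\}$, and replacing $B$ by $u^{-1}B$ (still unimodular) yields $(u^{-1}B)\,\Theta_{M_1}\,(u^{-1}B)^{tr}=\varepsilon\,\Theta_{M_2}$, proving the forward direction. For the converse, given unimodular $A$ and $\varepsilon\in\{1,q\}$ with $A\,\Theta_{M_1}\,A^{tr}=\varepsilon\,\Theta_{M_2}$, one defines $\phi\colon M_1\to M_2$ via the unimodular matrix
\[
\Phi=\begin{pmatrix}A^{-1} & 0\\ 0 & \varepsilon^{-1}\end{pmatrix}
\]
and checks directly that products involving $e_n^{(1)}$ are sent to zero on both sides by the annihilator property, while for $i,j\le n-1$ the identity $\phi(e_i)\cdot_2\phi(e_j)=(A^{-1}\Theta_{M_2}A^{-tr})_{i,j}\,e_n^{(2)}=\varepsilon^{-1}\Theta_{M_1,i,j}\,e_n^{(2)}=\phi(e_i\cdot_1 e_j)$ follows from the assumed congruence. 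The only truly delicate ingredient is the reduction modulo squares that localizes $\varepsilon$ in $\{1,q\}$; the rest is routine bookkeeping with block matrices.
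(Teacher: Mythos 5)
Your proof is correct and supplies exactly the computation the paper leaves implicit: the paper states this proposition as a ``straight consequence'' of Theorem~\ref{thm:theoremtheta} without writing out an argument, and your steps --- the block-triangular form of the matrix of an isomorphism forced by preservation of the cyclic annihilator, the resulting identity $A\Theta_{M_2}A^{tr}=\lambda\Theta_{M_1}$, and the reduction of the unit $\lambda$ modulo squares to land in $\{1,q\}$ --- are the intended ones. The converse construction via a block-diagonal unimodular matrix is likewise the standard inverse of this translation, so nothing is missing.
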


\begin{corollary}\label{corollary:88}
    If $\Theta_{M_1}$ and $\Theta_{M_2}$ are congruent then the associated algebras $M_1=(M, +, \cdot_1)$ and $M_2=(M, +, \cdot_2)$ are isomorphic.
\end{corollary}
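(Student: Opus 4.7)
The plan is to read Corollary \ref{corollary:88} as the easy ``if'' direction of Proposition \ref{proposition:isomorphismmatrices} specialized to $\varepsilon=1$. By hypothesis $\Theta_{M_1}$ and $\Theta_{M_2}$ are congruent, i.e.\ there is a unimodular $(n-1)\times(n-1)$ matrix $A$ with entries in $\mathbb{Z}_p$ such that $A\,\Theta_{M_1}\,A^{tr}=\Theta_{M_2}$. Since $1\in\{1,q\}$, this is exactly the statement of Proposition \ref{proposition:isomorphismmatrices} in the case $\varepsilon=1$, so $M_1\cong M_2$. Thus, once the proposition is established, the corollary is immediate.

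If instead I wanted to give a self-contained argument (not invoking the proposition), I would construct the isomorphism explicitly. Take the unimodular $A$ witnessing the congruence and extend it to the $n\times n$ unimodular matrix
\[
\widetilde A=\begin{pmatrix} A & 0\\ 0 & 1\end{pmatrix},
\]
which, viewed as acting on row vectors from the right, is a $\mathbb{Z}_p$-module automorphism of $M$ that fixes $\Ann(M)=\mathrm{span}\{e_n\}$ pointwise. It then remains to check that $\widetilde A$ intertwines the two products, i.e.\ $(v\widetilde A)\cdot_2(w\widetilde A)=(v\cdot_1 w)\widetilde A$ for all $v,w\in M$.

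By $\mathbb{Z}_p$-bilinearity it suffices to verify the identity on basis vectors. For $i,j\leq n-1$ one has, using Lemma \ref{lemma:formagamma}, $e_i\cdot_1 e_j=(\Theta_{M_1})_{i,j}\,e_n$, while on the other side
\[
(e_i\widetilde A)\cdot_2(e_j\widetilde A)=(A_i)\cdot_2(A_j)=\bigl(A\,\Theta_{M_2}\,A^{tr}\bigr)_{i,j}\,e_n,
\]
where $A_i$ denotes the $i$-th row of $A$ and the product on the right is computed from the defining matrix of $M_2$. Applying $\widetilde A$ to the first expression gives $(\Theta_{M_1})_{i,j}\,e_n$ (since $\widetilde A$ fixes $e_n$), so the identity reduces to $(A\,\Theta_{M_2}\,A^{tr})_{i,j}=(\Theta_{M_1})_{i,j}$. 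This is exactly the congruence relation, up to replacing $A$ by $A^{-1}$, which is harmless since $A^{-1}$ is also unimodular over $\mathbb{Z}_p$. For the remaining basis cases, i.e.\ whenever $i=n$ or $j=n$, both sides vanish because $e_n\in\Ann(M)$ and the products are $3$-nilpotent.

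The main (only) obstacle is a bookkeeping one: aligning the left/right action conventions and deciding whether the witness should be $A$ or $A^{-1}$, so that the direction of the constructed isomorphism matches the direction of the congruence. Once this is settled, no further work is needed, and the corollary is essentially a restatement of Proposition \ref{proposition:isomorphismmatrices}.
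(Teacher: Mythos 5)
Your proposal is correct and takes the same route as the paper, which offers no separate proof of this corollary precisely because it is the ``if'' direction of Proposition~\ref{proposition:isomorphismmatrices} with $\varepsilon=1$. Your supplementary self-contained construction of the intertwining automorphism $\widetilde{A}$ (with $A$ replaced by $A^{-1}$ so that the congruence $A\Theta_{M_1}A^{tr}=\Theta_{M_2}$ matches the identity $(v\widetilde{A})\cdot_2(w\widetilde{A})=(v\cdot_1 w)\widetilde{A}$) is also sound and correctly flags the only real subtlety, namely the direction of the change of basis.
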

Notice that for the general case as in Remark \ref{Ann=MM}, where $\Ann(M)=H\oplus K$ with \(K\) a cyclic \( \mathbb{Z}_p \)-module containing \(M\cdot M\), we have that the isomorphism classes of $M$ are in one to one correspondence with the isomorphism classes of $M/H$. 
Hence Theorem \ref{thm:theoremtheta} and Proposition \ref{proposition:isomorphismmatrices} allow us to  classify the isomorphism classes of the commutative, tosion-free and $3$-nilpotent $\mathbb{Z}_p$-algebras $M$ with $M\cdot M$ cyclic.
%\begin{definition}
  %Let $m=n-d$. We will say that the matrix $\Theta\defeq[\Theta_1\ldots \Theta_{m}]$ is the \emph{defining matrix} of the associative and commutative radical %$\mathbb{Z}_p$-algebra $(M,+,\cdot)$ of rank $n$ with annihilator of rank $d$ and  $M\cdot M$ cyclic. 
%\end{definition}

Next two results are direct consequences respectively of Theorem \ref{theorem:unimodular} and Theorem \ref{corollary:corollaryJordan}.
\begin{lemma}
Let $\Theta$ be the defining matrix of $(M,+,\cdot)$. If $\mathrm{det}(\Theta)$ is a unit in $\mathbb{Z}_p$ then 
\begin{equation*}
    \Theta\cong
\left(\begin{array}{cccc}
1&0&0&0\\
0&1&0&\vdots\\
\vdots &0 &\ddots &0\\
0&\cdots&0&d(\Theta)
\end{array}\right)
\end{equation*}
where $d(\Theta)$ is the discriminant of the matrix $\Theta$. \\
\end{lemma}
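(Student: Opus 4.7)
The plan is to reduce this to Theorem~\ref{theorem:unimodular} by viewing $\Theta$ as the Gram matrix of a unimodular lattice. Recall from Remark~\ref{Ann=MM} that, without loss of generality, $M = N \oplus \Ann(M)$ where $N = \mathrm{span}_{\mathbb{Z}_p}\{e_1,\dots,e_{n-1}\}$, and (as noted after Definition~\ref{defmat1}) the matrix $\Theta$ is precisely the Gram matrix of the symmetric bilinear form $b$ restricted to $N$ with respect to the basis $\{e_1, \dots, e_{n-1}\}$.

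First I would verify that $(N, b|_N)$ is a $\mathbb{Z}_p$-lattice in the sense of Section~\ref{sec:2}: it is a free $\mathbb{Z}_p$-module of rank $n-1$ equipped with a symmetric bilinear form. The hypothesis that $\det(\Theta) \in \mathbb{Z}_p^\times$ is exactly the statement that $(N, b|_N)$ is a unimodular lattice in the sense of Section~\ref{sec:2}.

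Then I would invoke Theorem~\ref{theorem:unimodular} applied to the lattice $(N, b|_N)$, which gives an isometry
\begin{equation*}
    (N, b|_N) \cong \mathbb{Z}_p \perp \cdots \perp \mathbb{Z}_p \perp d(N)\mathbb{Z}_p,
\end{equation*}
where $d(N)$ is the discriminant of $N$. Translating this isometry back into matrix language via Proposition~\ref{proposition:isomorphismmatrices}, there exists a unimodular $(n-1)\times(n-1)$ matrix $A$ such that $A \Theta A^{tr}$ is the diagonal matrix displayed in the statement, with last diagonal entry $d(\Theta) = d(N) \in \mathbb{Q}_p^\times/\mathbb{Z}_p^{\times 2}$.

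There is essentially no obstacle here since both the structural identification of $\Theta$ as a Gram matrix on $N$ and the invocation of Theorem~\ref{theorem:unimodular} are immediate; the only thing worth being careful about is ensuring that the congruence class of the discriminant is a well-defined invariant, which was already established in the discussion preceding Definition~\ref{definition:unimodular}-type notions in Section~\ref{sec:2}.
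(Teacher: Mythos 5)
Your proposal is correct and follows exactly the route the paper intends: the paper presents this lemma as a direct consequence of Theorem~\ref{theorem:unimodular}, and your argument simply makes explicit the identification of $\Theta$ with the Gram matrix of the unimodular lattice $(N,b|_N)$ before invoking that theorem. The only cosmetic point is that the final translation into matrix congruence needs nothing beyond the definition of isometry and change of basis, so the appeal to Proposition~\ref{proposition:isomorphismmatrices} is unnecessary (though harmless).
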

More in general we have the following result.
\begin{lemma}\label{lemma: corcorcor}
Let $\Theta$ be the $n\times n$ defining matrix of $(M,+,\cdot)$. Then $M=\perp_{j=1}^t M_{i_j}$ and 
\begin{equation*}
    \Theta\cong
\left(\begin{array}{cccc}
J_{i_1}(\varepsilon_{1})&0&0&0\\
0&J_{i_2}(\varepsilon_{2})&0&\vdots\\
\vdots &0 &\ddots &0\\
0&\cdots&0&J_{i_t}(\varepsilon_{t})
\end{array}\right),\; \text{ with } \; J_k(\varepsilon)=p^k\begin{pmatrix}
    \mathds 1 &0\\
    0& \varepsilon
\end{pmatrix},
\end{equation*}
and  $0\leq i_1<\dots<i_t\leq n$.

%for uniquely determined $0\leq i_1 < \cdots < i_t \leq k$ and $L_{i_j}=\mathbb{Z}_p\perp\dots\perp\mathbb{Z}_p\perp \epsilon_{i_j}\mathbb{Z}_p$ is a uniquely determined unimodular lattice %up to isometry, where $\epsilon_{i_j}$ is a unit in $\mathbb{Z}_p$.
\end{lemma}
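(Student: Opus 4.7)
The plan is to reduce this result to a direct invocation of the Jordan splitting theorem of Conway and Sloane (Theorem~\ref{corollary:corollaryJordan}) applied to the lattice naturally associated with $\Theta$. Recall that by Remark~\ref{Ann=MM} we may write $M=N\oplus \Ann(M)$ with $N=\mathrm{span}_{\mathbb{Z}_p}\{e_1,\dots,e_{n-1}\}$, and that $\Theta$ is precisely the Gram matrix of the symmetric bilinear form $b$ of Equation~\eqref{eq:bilinearform} restricted to the free $\mathbb{Z}_p$-lattice $N$.

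First I would verify that $(N, b|_{N\times N})$ is a \emph{regular} $\mathbb{Z}_p$-lattice in the sense required by Theorem~\ref{corollary:corollaryJordan}. Since the preceding lemma shows that $\Theta$ has maximal rank, the bilinear form $b|_{N\times N}$ extends to a non-degenerate form on the $\mathbb{Q}_p$-vector space $\mathbb{Q}_p\otimes_{\mathbb{Z}_p} N$. In particular any $x\in N$ with $b(x,y)=0$ for every $y\in N$ must vanish already in $\mathbb{Q}_p\otimes N$, hence $x=0$, so $\mathrm{Rad}(N)=0$.

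Next I would apply Theorem~\ref{corollary:corollaryJordan} to $(N,b|_{N\times N})$: there is an orthogonal decomposition $N=\perp_{i\ge 0}(N_i,b_i)$ with finitely many nonzero summands, where each nonzero $N_i\cong \mathbb{Z}_p^{n_i}$ carries the bilinear form of Gram matrix $J_i(\varepsilon_i)=p^{i}\bigl(\begin{smallmatrix}\mathds{1}&0\\ 0&\varepsilon_i\end{smallmatrix}\bigr)$ with $\varepsilon_i\in\{1,q\}$. Listing the indices for which $N_i\ne 0$ as $0\le i_1<i_2<\dots<i_t$, choosing a $\mathbb{Z}_p$-basis of $N$ adapted to this orthogonal splitting and letting $A$ be the unimodular change-of-basis matrix from the original basis $\{e_1,\dots,e_{n-1}\}$ to this new one, one obtains
\begin{equation*}
A\,\Theta\, A^{tr}=\mathrm{diag}\bigl(J_{i_1}(\varepsilon_1),\dots,J_{i_t}(\varepsilon_t)\bigr),
\end{equation*}
which is exactly the congruence asserted in the statement. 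The identification $M=\perp_{j=1}^t M_{i_j}$ is then just a relabeling of the Jordan summands of $N$ (extended by the annihilator summand if one wishes).

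There is essentially no genuine obstacle: the only point requiring a short argument is the regularity of $N$, which is immediate from maximal rank of $\Theta$; everything else is a mechanical translation between the language of symmetric bilinear forms on $\mathbb{Z}_p$-lattices (Section~\ref{sec:2}) and that of defining matrices of $3$-nilpotent $\mathbb{Z}_p$-algebras with cyclic annihilator (Section~\ref{sec:p}), the bridge being Corollary~\ref{corollary:88}.
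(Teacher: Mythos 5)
Your proposal is correct and follows the same route as the paper, which states this lemma without proof as a direct consequence of the Jordan splitting theorem (Theorem~\ref{corollary:corollaryJordan}) applied to the lattice $(N,b|_{N\times N})$. Your added verification that $\mathrm{Rad}(N)=0$ follows from the maximal rank of $\Theta$ is the one detail the paper leaves implicit, and you handle it correctly.
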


We are now in the position to give a complete classification of the isomorphism classes of $(M,+,\cdot)$.
\begin{theorem}[Unimodular case]\label{theorem:unimodularcase}
%Let $(M,+,\cdot)$ be an associative, commutative radical $\mathbb{Z}_p$-algebras such that $M$ has rank $n$ as free $\mathbb{Z}_p$-module and $M \cdot M$ is cyclic. Let us suppose that  the defining matrix $\Theta$ of $M$ is unimodular.  Then there are two isomorphism classes of $M$ if $n-1$ is even and one isomorphism class if $n-1$ is odd. Moreover, the isomorphism classes in the case of $n$ odd are determined by $d(\Theta)$.
Let $(M,+,\cdot)$ be any commutative, torsion-free and $3$-nilpotent $\mathbb{Z}_p$-algebra such that $M$ has rank $n$ with $\Ann(M)$ of rank $d$ and $M\cdot M$ cyclic. If the defining matrix $\Theta$ of $M$ is unimodular then there are two isomorphism classes of $M$ in the case of $n-d$ is even and one isomorphism class in the case of $n-d$ is odd. Moreover, if $n-d$ is even the isomorphism classes are determined by $d(\Theta)$ .
\end{theorem}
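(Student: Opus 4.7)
The plan is to combine the reduction of Remark~\ref{Ann=MM} with the diagonalization of unimodular forms and the isomorphism criterion of Proposition~\ref{proposition:isomorphismmatrices}. First, by Remark~\ref{Ann=MM} every such $M$ decomposes as $M/H\oplus H'$, where $H'$ has trivial multiplication and rank $d-1$ while $M/H$ has cyclic annihilator and rank $n-d+1$. Since the trivial summand contributes nothing to the isomorphism class, classifying $M$ reduces to classifying $M/H$; the defining matrix $\Theta$ is intrinsic to $M/H$, has size $(n-d)\times(n-d)$, and inherits the unimodularity.

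Applying Theorem~\ref{theorem:unimodular} to the lattice carried by $\Theta$ produces a unimodular $A$ with $A\Theta A^{tr}=\mathrm{diag}(1,\ldots,1,d(\Theta))$. Since $p$ is odd, the group $\mathbb{Z}_p^\times/\mathbb{Z}_p^{\times 2}$ has exactly two elements, represented by $1$ and by a fixed non-square $q$. Moreover, Theorem~\ref{thm:theoremtheta} guarantees that both $\mathrm{diag}(1,\ldots,1)$ and $\mathrm{diag}(1,\ldots,1,q)$ actually arise as defining matrices of algebras of the required type, so both discriminant values are attained.

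Finally, by Proposition~\ref{proposition:isomorphismmatrices} two such algebras $M_1$ and $M_2$ are isomorphic if and only if there exist a unimodular $A$ and $\varepsilon\in\{1,q\}$ with $A\Theta_{M_1}A^{tr}=\varepsilon\Theta_{M_2}$. Taking determinants modulo $\mathbb{Z}_p^{\times 2}$ gives
\[
d(\Theta_{M_1})=\varepsilon^{n-d}\,d(\Theta_{M_2})\in\mathbb{Z}_p^\times/\mathbb{Z}_p^{\times 2},
\]
and conversely, if this identity holds then $\Theta_{M_1}$ and $\varepsilon\Theta_{M_2}$ are unimodular matrices of size $n-d$ with identical discriminant, hence congruent by the normal form above and so yielding isomorphic algebras. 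The parity dichotomy is now immediate: when $n-d$ is odd, $\varepsilon^{n-d}$ runs over $\{1,q\}$ as $\varepsilon$ does, so any two discriminants can be matched and there is a single isomorphism class; when $n-d$ is even, $\varepsilon^{n-d}$ is a square for every $\varepsilon\in\{1,q\}$, so $d(\Theta)$ is a complete invariant and there are exactly two classes. The only point requiring genuine care is this determinant computation, where the parity of $n-d$ controls whether the scalar $\varepsilon$ from Proposition~\ref{proposition:isomorphismmatrices} can alter the square class of the discriminant.
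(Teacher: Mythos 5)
Your proof is correct and follows essentially the same route as the paper: reduce to the cyclic-annihilator case via Remark~\ref{Ann=MM}, invoke the classification of unimodular lattices by rank and discriminant (Theorem~\ref{theorem:unimodular}), and settle the count through the determinant computation $d(\Theta_{M_1})=\varepsilon^{n-d}d(\Theta_{M_2})$ coming from Proposition~\ref{proposition:isomorphismmatrices}. The only cosmetic difference is that the paper fixes the two diagonal representatives and checks when $\Theta_1$ and $q\Theta_2$ are congruent, whereas you phrase the same parity argument as a general biconditional on discriminants.
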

\begin{proof}
By what observed after Corollary \ref{corollary:88} we may assume $(M,+,\cdot)$ to be commutative, torsion-free and $3$-nilpotent $\mathbb{Z}_p$-algebras with cyclic annihilator (i.e. $d=1$).
%special circle 
Let us fix $\Theta_{1}=\langle 1\rangle\perp\dots\perp\langle 1\rangle$ and $\Theta_{2}=\langle 1\rangle\perp\dots\langle 1\rangle\perp\langle q\rangle$ where $q$ is a non-square element of $\mathbb{Z}_p$. It is clear that $\Theta_{1}$ and $\Theta_{2}$ can not be congruent since they have different discriminant.\\
Thus, we just have to check when $\Theta_{1}$ and $q\Theta_{2}$ are congruent using Proposition~\ref{proposition:isomorphismmatrices}.
\begin{itemize}
    \item If $n-1$ is even, then  $\mathrm{det}(\Theta_{1})=1$ is a square and $\mathrm{det}(q\Theta_{2})=q^{n}$ is not a square. Then $\Theta_{1}$ and $q\Theta_{2}$ are not congruent.
    \item If $n-1$ is odd, then   $\mathrm{det}(\Theta_{1})=1$ is a square and $\mathrm{det}(q\Theta_{2})=q^{n}$ is a square. Then $\Theta_{1}$ and $q\Theta_{2}$ are congruent.\qedhere
\end{itemize}
\end{proof}
\begin{remark}
 In the general case, the defining matrix $\Theta$ of any commutative, torsion-free and $3$-nilpotent $\mathbb{Z}_p$ algebra $(M,+,\cdot)$ with $M\cdot M$ cyclic is not unimodular and there are infinite isomorphism classes of such algebras. Indeed, by Lemma \ref{lemma: corcorcor}, we can represent $\Theta$ in its Jordan form; counting the isomorphism classes of such algebras is equivalent to count all the possible Jordan splitting of a $n\times n$ symmetric matrix over $\mathbb{Z}_p$. There are infinitely many such splittings since the possible choices for the scales of the Jordan blocks.

 %for each $i>0$ we can define a lattice $(M_i,b_i)$ associated to the Jordan block  $J_i(\varepsilon)$.
 Moreover, let $I$ be an isomorphism class of commutative, torsion-free and $3$-nilpotent algebras with cyclic annihilator and let  $\Theta$ be the defining matrix (represented in its Jordan form) of an algebra in that class. It follows by Proposition \ref{proposition:isomorphismmatrices} that if $\Theta$ has at least one Jordan block of odd dimension then $|I|=2$, otherwise $|I|=1$.

\end{remark}

\section{Isoclinisms of torsion module braces over \texorpdfstring{$\mathbb{Z}_p$}{Zp}}\label{sec:p+1}
In this section we deal with the torsion case, giving a classification up to isoclinisms (Definition \ref{def:isoclinism}). 
We shall denote by $(M,+,\cdot)$ a $3$-nilpotent algebra as defined in Section \ref{sec:3} with $M\cdot M$ cyclic. As above the product induces a symmetric bilinear form \[b\colon M/\Ann(M)\times M/\Ann(M)\to M\cdot M.\]  
Due to the torsion assumption, the module \(M\cdot M\) is a finite and cyclic  $\mathbb{Z}_p$-submodule of $M$ and \(M/\Ann(M)\) is a finite \(\mathbb{Z}_p\)-module. In particular $M\cdot M\cong \mathbb{Z}_p/p^t\mathbb{Z}_p$ for some $t>0$.
We notice that $(M,+,\circ)$ is a $\mathbb{Z}_p$-brace with $\circ$ defined in Equation \eqref{ciserve}.

%Thus $M$ is isomorphic to the direct sum
%$$
%\bigoplus_{i=1}^{s}(\mathbb{Z}_p/p^{k_i}\mathbb{Z}_p)^{h_i}, \quad \text{where } h_1+\dots+h_s=n
%$$
%where $h_i$ and $k_i$ are uniquely determined by $M$.
%and we shall say that $M$ is of type $(\underbrace{k_1,\dots,k_1}_{h_1},\ldots,\underbrace{k_s,\dots,k_s}_{h_s})$. 

%if $\overbarr{\mathcal{B}}$ is a basis for $\overbar{M}$, then there exist  $\bar{e}_1,\ldots,\bar{e}_d\in\overbarr{\mathcal{B}}$ and $a_1,\ldots,a_d\in \mathbb{Z}_p$ such that %$a_1\bar{e}_1,\ldots,a_d\bar{e}_d$ form a basis for the $\mathbb{Z}_p$-module  $\ker(\pi)$ and $a_{i}|a_{i+1}$ for $i=1,\ldots,d-1$. Since $M\cong \overbar{M}/\ker(\pi)$, then, without %loss of generality, we can assume that $M$ is generated by $\pi(\bar{e}_{d+1})\defeq e_1,\ldots,\pi(\bar{e}_m)\defeq e_n $.

For any finitely-generated free $\mathbb{Z}_p$-module $\overbar{M}$  having \(M/\Ann(M)\) as a quotient and projection \(\pi\colon \overbar{M} \to M/\Ann(M)\), there exists a bilinear form making \( (\overbar{M} , \bar{b}) \) into a lattice such that the following diagram
    \begin{equation*}\label{diagram}
         \begin{tikzcd}
    (\overbar{M})^2\arrow{d}{\bar{b}} \arrow{r}{\pi}& (M/\mathrm{Ann}(M))^2\arrow{d}{b}\\
      \mathbb{Z}_p \arrow{r}{\Theta} &M\cdot M
  \end{tikzcd}
    \end{equation*}
    commutes, where $\Theta$ is the reduction modulo $p^t$. Any such lattice is said to be a \emph{covering} of the algebra \(M\) and \(\bar b\) is said to be a \emph{lifting} of \(b\). Any two coverings of \(M\) are easily seen to be \(p^t\)-equivalent according to Definition~\ref{pteq}.

    We point out that there are actually infinitely many coverings of \(M\). Notice that  $\bar{b}_1$ and $\bar{b}_2$ are two symmetric bilinear forms on \(\overbar M\) making the previous into a commutative diagram if and only if there exists another bilinear form $\bar{b}_3$ over $\overbar{M}$ such that 
\begin{equation*}
    \bar{b}_1-\alpha \bar{b}_2=p^t \bar{b}_3
\end{equation*}
where $\alpha$ is an invertible element of $\mathbb{Z}_p$.\ We denote by $[b]$ the equivalence class of lattices that are covering of \(M\).
\begin{remark}
  It is actually possible to find infinitely many non-degenerate liftings of $b$. Indeed, let $\bar{b}\colon \overbar{M}\times \overbar{M}\to\mathbb{Z}_p$ be a symmetric lifting of $b$ with associated matrix $\overbar{B}$, then the symmetric lifting with associated matrix $\overbar{B}-p^h\mathds{I}_n$, for $p^h$ not an eigenvalue of $\overbar{B}$ and $h\geq t$, provides a non-degenerate covering in the same class.  
\end{remark}

%Notice that $\Theta(\mathrm{det}(\bar{b}))=\mathrm{det}(\Theta \bar{b})=\mathrm{det}(b)$, in particulat if $b$ is non-degenerate then $\bar{b}$ is non-degenerate. It is actually possible %to find infinitely many non-degenerate symmetric bilinear liftings in $[b]$ even if $b$ is degenerate. Indeed let $b\colon M\times M\to\mathbb{Z}_p/p^t\mathbb{Z}_p$ be a symmetric %bilinear form with associated matrix $B$. We denote by $B'$ the matrix $B$ seen as a matrix over the set $\lbrace 0,\dots,p^t-1\rbrace\subseteq \mathbb{Z}_p$. It follows that any %symmetric lifting $\bar{b}\in [b]$ with associated matrix $B'-p^h \mathbb{1}_n$ is non-degenerate when $h\geq t$ and $p^t$ is different to any  eigenvalue of $B'$.

From Remark \ref{ptremark}, using the same notation, it follows immediately the next result.
\begin{proposition}\label{proposition:bilform}
If  two coverings $(\overbar M,\bar b)$ and $(\overbar N,\bar g)$ with Jordan decompositions modulo $p^t$ respectively $(\bar b_{1},\ldots,\bar b_{t-1})$ and $(\bar g_{1},\ldots,\bar g_{t-1})$ are in $[b]$ for some $b$, then 
%Let $(\overbar M,\bar b)$ and $(\overbar N,\bar g)$ be two lattices  with Jordan decomposition modulo $p^t$ respectively $(\bar b_{1},\ldots,\bar b_{t-1})$ and $(\bar g_{1},\ldots,\bar g_{t-1})$. There exists a bilinear form $b$ such that $(\overbar M,\bar b), (\overbar N,\bar g)\in [b]$ if and only if
\begin{enumerate}
    \item $\rk(\bar b_{i})=\rk(\bar g_{i})$ for each $i=1,\ldots,t-1$, and
    \item $\bar b_{i}=\bar g_{i}$ if $\rk(\bar b_i)=\rk(\bar g_i)$ is even.
\end{enumerate}

%are in $[b]$ have the same $h$-Jordan components of even dimension for $h<t$.
\end{proposition}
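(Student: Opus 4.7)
The plan is to reduce the statement to the known classification of unimodular $\mathbb{Z}_p$-lattices (Theorem~\ref{theorem:unimodular}) by exploiting the $p^t$-equivalence that ties together the coverings in $[b]$. Indeed, by construction every two lattices in $[b]$ are $p^t$-equivalent in the sense of Definition~\ref{pteq}, so in particular $(\overbar M,\bar b)\sim_{p^t}(\overbar N,\bar g)$. By Remark~\ref{ptremark} this is equivalent to the existence of a unit $\alpha\in\mathbb{Z}_p^{\times}$ such that the truncated Jordan decompositions are proportional, i.e.
\[
(\bar b_1,\ldots,\bar b_{t-1})\;\cong\;\alpha\,(\bar g_1,\ldots,\bar g_{t-1}),
\]
block by block.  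This single identity is the one fact from which both assertions will be extracted.

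For (1), I would simply observe that an isometry between two lattices, as well as multiplication of a bilinear form by a unit, preserves the rank of the underlying module. Hence for every index $i=1,\ldots,t-1$ the block $\bar b_i$ and the block $\alpha\bar g_i$ have the same rank, and since $\alpha\bar g_i$ and $\bar g_i$ obviously agree in rank, we obtain $\rk(\bar b_i)=\rk(\bar g_i)$.

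For (2), I focus on a fixed index $i$ and write $n_i$ for the common rank. Each $\bar b_i$ is a scaled unimodular lattice with scale $p^i$, and dividing by $p^i$ yields an honest unimodular lattice over $\mathbb{Z}_p$; by Theorem~\ref{theorem:unimodular} its isometry class is completely determined by its rank $n_i$ and its discriminant in $\mathbb{Z}_p^{\times}/\mathbb{Z}_p^{\times 2}$, and the same holds for $\bar g_i$. The isometry $\bar b_i\cong\alpha\bar g_i$ then translates into the relation $d(\bar b_i)=\alpha^{n_i}\,d(\bar g_i)$ in $\mathbb{Z}_p^{\times}/\mathbb{Z}_p^{\times 2}$, obtained by pulling the scalar $\alpha$ through an $n_i\times n_i$ Gram matrix. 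When $n_i$ is even, $\alpha^{n_i}$ is a square in $\mathbb{Z}_p^{\times}$, hence trivial modulo squares, and the two discriminants coincide. Theorem~\ref{theorem:unimodular} then yields $\bar b_i=\bar g_i$, as desired.

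The only subtle point—and the one I would treat most carefully—is the behaviour of the discriminant under the global unit scaling by $\alpha$, and in particular the remark that $\alpha^{n_i}\in\mathbb{Z}_p^{\times 2}$ exactly when $n_i$ is even. Everything else is a direct invocation of the Jordan-splitting and unimodular-classification statements already established in Section~\ref{sec:2}.
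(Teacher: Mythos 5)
Your proposal is correct and follows essentially the same route as the paper: both reduce to the $p^t$-equivalence of coverings, read off the proportionality $(\bar b_1,\ldots,\bar b_{t-1})\cong\alpha(\bar g_1,\ldots,\bar g_{t-1})$ via Remark~\ref{ptremark}, and then use that scaling by $\alpha$ leaves the discriminant of an even-rank block unchanged modulo squares. You merely spell out the discriminant computation $d(\bar b_i)=\alpha^{n_i}d(\bar g_i)$ and the appeal to Theorem~\ref{theorem:unimodular} in more detail than the paper does.
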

\begin{proof}
We already observed that two coverings of $M$ are $p^t$-equivalent. It follows that  $(\bar b_{1},\ldots,\bar b_{t-1})\cong\alpha(\bar g_{1},\ldots,\bar g_{t-1})$ for some $\alpha\in \mathbb Z_p^{\times}$ and so $\rk(\bar b_{i})=\rk(\bar g_{i})$ for each $i=1,\ldots,t-1$. Since multiplication by $\alpha$ does not change the discriminant of Jordan blocks of even rank, if $\rk(\bar b_i)=\rk(\bar g_i)$ is even then $\bar b_i=\bar g_i$. 
%Conversely if $\rk(\bar b_{i})=\rk(\bar g_{i})$ for each $i=1,\ldots,t-1$ and $\bar b_{i}=\bar g_{i}$ when $\rk(\bar b_i)=\rk(\bar g_i)$ is even, then $(\overbar M,\bar b)$ and $(\overbar N,\bar g)$ are $p^t$-equivalent.
\end{proof}
\begin{comment}
    \begin{proof}
Let $\bar{b}_1,\bar{b}_2\in [b]$ non-degenerate. We know that there exists $\bar{b}_3$ such that 
    \begin{equation}\label{eq:ecco}
        \bar{b}_1=\bar{b}_2+p^t \bar{b}_3.
    \end{equation}
Let $\overbar{B}_1$, $\overbar{B}_2$ and $\overbar{B}_3$ be the matrices associated to $\bar{b}_1,\bar{b}_2$ and $\bar{b}_3$, respectively. By Theorem \ref{corollary:corollaryJordan}, there exists a matrix $T\in \mathrm{GL}_n(\mathbb{Z}_p)$ such that
    \begin{equation*}
        J_{\overbar{B}_1}=T\overbar{B}_1 T^{tr}=T\overbar{B}_2 T^{tr}+p^t T\overbar{B}_3 T^{tr},
    \end{equation*}
    where $J_{\overbar{B}_1}$ is the Jordan splitting of the bilinear form $\bar{b}_1$ and last equality depends on Equation \eqref{eq:ecco}.\\
    Thus, applying the change of basis given by the unimodular matrix $T$, the matrices $\overbar{B}_1$ and $\overbar{B}_2$ have the same Jordan splitting $J_{\overbar{B}_1}$ modulo $p^t$, i.e.
\begin{equation*}
    J_{\overbar{B}_1}\ \mathrm{mod}(p^t)=T\overbar{B}_2 T^{tr}\ \mathrm{mod}(p^t).\qedhere
\end{equation*}
\end{proof} 
\end{comment}

We consider now a symmetric bilinear form $\bar{b}\colon\overbar{M}\times \overbar{M}\to \mathbb{Z}_p$ with $p^t$-radical $K=\lbrace x\in \overbar{M}\colon \bar{b}(x,y)\in p^t\mathbb{Z}_p,\ \text{for all } y\in \overbar{M}\rbrace$, ant $t>0$ and we construct an algebra $M$ whose product form $b$ is such that $\bar b\in [b]$. Clearly $K$ is determined by the Jordan splitting of $\bar{b}$ and it is equal to the orthogonal sum of the $h$-Jordan components with $h\geq t$. Let $\Tilde{b}\colon \overbar{M}\times \overbar{M}\to \mathbb{Z}_p/p^t\mathbb{Z}_p$ be the reduction of $\bar b$ modulo $p^t$ and let $M=\Stem(\Tilde{b})$ be as in Definition \ref{stemalgebra}. We have $\overbar{M}/K\cong M/\Ann(M)$ and the following commutative diagram
\begin{comment}
    \begin{equation*}
\begin{tikzcd}
    (\overbar{M})^2\arrow{d}{\bar{b}}\arrow{rd}{\Tilde{b}}\\
      \mathbb{Z}_p \arrow{r}{\Theta} &\mathbb{Z}_p/p^t\mathbb{Z}_p
  \end{tikzcd}
\end{equation*}
\end{comment}

%it is possible to define the symmetric bilinear form $\Tilde{b}\colon \overbar{M}\times \overbar{M}\to \mathbb{Z}_p/p^t\mathbb{Z}_p$ by $\Tilde{b}(v,w)=\Theta(\bar{b}(a,b))$ where $v,w\in %M/\Ann(M)$ and $a\in \pi(v)^{-1},b\in \pi(w)^{-1}$. %By construction, we have that $\Ann(M)\cong K$ and $M\cdot M=\mathbb{Z}_p/p^t\mathbb{Z}_p$. We notice that since $\ker(\pi)$ is an $\mathbb{Z}_p$-ideal and $(\overbar{M},+,\circ)$ is a $\mathbb{Z}_p$-brace, by Lemma~\ref{lemdelcor}, we have that $(M,+,\circ)$ is an $\mathbb{Z}_p$-brace where $\circ$ is defined by Equation \ref{ciserve}. 
\begin{equation*}
\begin{tikzcd}
   (\overbar{M})^2\arrow{d}{\bar{b}}\arrow{r}{}\arrow{rd}{\Tilde{b}}&(M/\Ann(M))^2\arrow{d}{b}\\
      \mathbb{Z}_p \arrow{r}{\Theta} &\mathbb{Z}_p/p^t\mathbb{Z}_p
  \end{tikzcd}
\end{equation*}
where $b$ is the symmetric bilinear form induced by the product in the algebra $M$.

\begin{proposition}\label{propeccoqui}
    The non-degenerate lattice $(\overbar{M},\bar b)$ is the covering of any algebra in the isoclinism class of $M=\Stem(\bar{b}\bmod p^t)$.
    %symmetric bilinear form $\bar{b}\colon \overbar{M}\times \overbar{M}\to \mathbb{Z}_p$ defines an isoclinism class of $M$ with $M\cdot M\cong \mathbb{Z}_p/p^t\mathbb{Z}_p$.
\end{proposition}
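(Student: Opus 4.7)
The plan is to verify first that $(\overbar{M},\bar b)$ satisfies the axioms of a covering for the algebra $M = \Stem(\tilde b)$ itself, and then to transport this covering structure along the pair of isomorphisms provided by isoclinism to any algebra $A \sim M$.

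First I would unpack the structure of $M = \Stem(\tilde b)$. Write $K = \{x \in \overbar{M} : \bar b(x,\overbar{M}) \subseteq p^t\mathbb{Z}_p\}$ for the $p^t$-radical of $\bar b$. Because $\bar b$ is non-degenerate, the reduction $\tilde b$ has $K$ as its radical and so descends to a non-degenerate $\mathbb{Z}_p$-bilinear form on $\overbar{M}/K$ with values in $\mathbb{Z}_p/p^t\mathbb{Z}_p$. Applying Definition~\ref{stemalgebra} to $\tilde b$ together with this non-degeneracy, a direct computation yields
\[
\Ann(M) = \{0\} \oplus \mathbb{Z}_p/p^t\mathbb{Z}_p, \qquad M/\Ann(M) \cong \overbar{M}/K, \qquad M \cdot M \cong \mathbb{Z}_p/p^t\mathbb{Z}_p,
\]
and the product form $b \colon M/\Ann(M) \times M/\Ann(M) \to M\cdot M$ is, by construction, the factorization of $\tilde b$ through $\overbar{M}/K$. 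Taking $\pi\colon \overbar{M} \to \overbar{M}/K$ to be the canonical projection and $\Theta\colon \mathbb{Z}_p \to \mathbb{Z}_p/p^t\mathbb{Z}_p$ to be the canonical reduction, the covering diagram
\[
b \circ (\pi \times \pi) = \tilde b = \Theta \circ \bar b
\]
commutes tautologically. Since $\overbar{M}$ is a finitely-generated free $\mathbb{Z}_p$-module, this establishes $(\overbar{M},\bar b)$ as a covering of $M$.

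Next I would extend to an arbitrary $A \sim M$. By Definition~\ref{def:isoclinism} there are $\mathbb{Z}_p$-module isomorphisms
\[
\psi \colon A/\Ann(A) \longrightarrow M/\Ann(M), \qquad \phi \colon A \cdot A \longrightarrow M \cdot M,
\]
satisfying the compatibility $\phi(a_1 \cdot a_2) = \psi(a_1)\cdot \psi(a_2)$, i.e.\ $\phi \circ b_A = b \circ (\psi \times \psi)$ where $b_A$ denotes the product form of $A$. Set $\pi_A \defeq \psi^{-1} \circ \pi$ and $\Theta_A \defeq \phi^{-1} \circ \Theta$. Then $\pi_A$ is a surjection of the free $\mathbb{Z}_p$-module $\overbar{M}$ onto $A/\Ann(A)$, and the covering diagram for $A$ commutes by the short chase
\[
b_A \circ (\pi_A \times \pi_A) = \phi^{-1}\circ b \circ (\psi \times \psi)\circ (\pi_A \times \pi_A) = \phi^{-1}\circ b \circ (\pi \times \pi) = \phi^{-1}\circ \Theta \circ \bar b = \Theta_A \circ \bar b,
\]
where the first equality uses the isoclinism compatibility and the third uses the covering relation for $M$ just established. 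Hence $(\overbar{M},\bar b)$ is a covering of $A$, as required.

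The argument is essentially a diagram chase once the definitions are unwound; there is no serious obstacle. The only mildly delicate point I would treat with care is the identification of $\Ann(\Stem(\tilde b))$ with the second summand $\{0\} \oplus \mathbb{Z}_p/p^t\mathbb{Z}_p$, which is exactly the step where the non-degeneracy hypothesis on $\bar b$ enters: without it the radical of $\tilde b$ would be strictly larger than $K$ and the normalisations above would fail.
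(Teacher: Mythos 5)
Your argument is correct and follows essentially the same route as the paper: the paper's proof is exactly the single commutative diagram obtained by concatenating the projection $\overbar{M}\to\overbar{M}/K\cong M/\Ann(M)$ with the isoclinism isomorphisms $\psi\times\psi$ and $\phi$, which you have merely split into two explicit steps. One small quibble with your closing aside: the radical of $\tilde b$ equals $K$ by the very definition of $K$, with or without non-degeneracy of $\bar b$, so that is not where the non-degeneracy hypothesis is doing its work.
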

\begin{proof}
    We define $K=\lbrace x\in \overbar{M}\colon \bar{b}(x,y)\in p^t\mathbb{Z}_p,\ \text{for all } y\in \overbar{M}\rbrace$ and we note that $\overbar{M}/K\cong M/\Ann(M)$. 
    Let $N$ be an algebra in the isoclinism class of $M$ with $\psi$ and $\phi$ isomorphisms as in Definition \ref{def:isoclinism}. It is possible to construct the following diagram
    \begin{equation}
         \begin{tikzcd}
    (\overbar{M})^2\arrow{d}{\bar{b}} \arrow{r}{\pi}& (\overbar{M}/K)^2\cong (M/\Ann(M))^2\arrow{d}{b_M}\arrow{r}{\psi\times \psi}& (N/\Ann(N))^2\arrow{d}{b_N}\\
      \mathbb{Z}_p \arrow{r}{\Theta} &\mathbb{Z}_p/p^t\mathbb{Z}_p\cong M\cdot M\arrow{r}{\phi}&N\cdot N
  \end{tikzcd}
    \end{equation}
    where $b_M$ and $b_N$ are the bilinear maps associated to the products of $M$ and $N$ respectively. The statement follows since the previous diagram is commutative.
\end{proof}
We prove now that there are finitely many isoclinism classes of  commutative, $3$-nilpotent, torsion $\mathbb Z_p$-algebras $M$ with $M\cdot M$ cyclic.
In the following statement we set $\binom{a}{b}=0$ if $a$ is not an integer.
\begin{theorem}
There are 
\begin{equation*}
    \displaystyle\sum_{s=1}^{t} \binom{t}{s}2^{s-1}\left(    \binom{n-1}{s-1}+\binom{n/2-1}{s-1}\right) 
\end{equation*}
isoclinism classes of commutative, $3$-nilpotent $\mathbb{Z}_p$ algebras $M$ with torsion of rank $n$ and $M\cdot M\cong \mathbb{Z}_p/p^t\mathbb{Z}_p$.

%where $$K(n,s)=\binom{n-1}{s-1}$$ is the number of partition of $n$ in $s$ non-empty parts, when $n$ is an integer and zero otherwise.
\end{theorem}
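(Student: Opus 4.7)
The plan is to convert the count into a combinatorial enumeration of Jordan decompositions modulo $p^t$, using the classification of $p^t$-equivalence classes of coverings. First I would invoke Proposition~\ref{propeccoqui}: every algebra $M$ with the prescribed invariants is isoclinic to $\Stem(\bar b\bmod p^t)$ for some non-degenerate covering $(\overbar M,\bar b)$ of rank $n$, and two such algebras are isoclinic precisely when their coverings lie in the same class $[\bar b]$. By Remark~\ref{ptremark} together with Proposition~\ref{proposition:bilform}, $p^t$-equivalence of coverings is characterized by the Jordan decompositions modulo $p^t$ being proportional by a unit $\alpha\in\mathbb{Z}_p^\times$. So the problem reduces to counting orbits of rank-$n$ Jordan decompositions modulo $p^t$ under $\mathbb{Z}_p^\times$-scaling.

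I would then parameterize the Jordan data. Fix $s\in\{1,\dots,t\}$ to be the number of nontrivial Jordan blocks; the $s$ scales occupied are chosen from $\{0,\dots,t-1\}$ ($\binom{t}{s}$ possibilities), and the ranks $(r_1,\dots,r_s)$ form a composition of $n$ into $s$ positive parts ($\binom{n-1}{s-1}$ possibilities). Each block $J_i(\varepsilon_i)$ also carries a discriminant $\varepsilon_i\in\{1,q\}$, giving $2^s$ raw discriminant configurations.

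The crux is computing the orbit sizes of these configurations under $\mathbb{Z}_p^\times$-scaling. Multiplying the whole form by $\alpha$ rescales the $i$-th block's matrix by $\alpha$, which by Theorem~\ref{theorem:unimodular} multiplies its discriminant by $\alpha^{r_i}$ modulo squares; choosing $\alpha$ to be a non-square therefore flips precisely the discriminants of the odd-rank blocks. Hence if all ranks are even the scaling action is trivial and every configuration is its own orbit ($2^s$ orbits), while if at least one rank is odd the orbits have size $2$ and there are $2^{s-1}$ of them. Since compositions of $n$ into $s$ positive even parts correspond bijectively to compositions of $n/2$ into $s$ positive parts (and thus number $\binom{n/2-1}{s-1}$, taken as zero when $n$ is odd), the number of classes at level $s$ is
\[\binom{t}{s}\Bigl[2^{s-1}\bigl(\tbinom{n-1}{s-1}-\tbinom{n/2-1}{s-1}\bigr)+2^{s}\tbinom{n/2-1}{s-1}\Bigr]=\binom{t}{s}2^{s-1}\Bigl(\tbinom{n-1}{s-1}+\tbinom{n/2-1}{s-1}\Bigr),\]
and summing over $s$ delivers the stated formula.

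The main obstacle I anticipate is sharpening the bijection between isoclinism classes and $\mathbb{Z}_p^\times$-orbits of Jordan data: Proposition~\ref{proposition:bilform} only gives the necessary direction, so one must also argue that distinct orbits produce non-isoclinic stem algebras. This should follow by combining the uniqueness part of Theorem~\ref{corollary:corollaryJordan} with a direct inspection of how unit scalings interact with the $\Stem$ construction, but it is the one place where a clean orbit-counting argument can hide a subtle identification.
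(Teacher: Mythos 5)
Your proposal is correct and follows essentially the same route as the paper: reduce to counting Jordan splittings modulo $p^t$ up to scaling by $\mathbb{Z}_p^\times$, choose the $s$ scales in $\binom{t}{s}$ ways and the ranks as compositions of $n$, observe that a non-square unit flips exactly the odd-rank discriminants, and split into the ``at least one odd rank'' case ($2^{s-1}$ orbits) and the ``all even ranks'' case ($2^s$ orbits, counted via compositions of $n/2$), recovering the identity $2^{s-1}H(n,s)+2^sK(n/2,s)=2^{s-1}\bigl(K(n,s)+K(n/2,s)\bigr)$. The only difference is that you explicitly flag the need to verify that distinct orbits yield non-isoclinic stem algebras, a point the paper's own proof also leaves implicit.
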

\begin{proof}
%Let $(\overbar M,\bar b)$ be a covering of $M$. There exists a basis for $\overbar M$ such that 
%\begin{equation}
 %       (\overbar M,\bar b)=(\overbar M_{i_1},\bar b_{i_1})\perp (\overbar M_{i_2},\bar b_{i_2})\perp\dots\perp (\overbar M_{i_d},\bar b_{i_d})
  %  \end{equation}
   % and $\mathfrak s(\overbar M_{i_j})=p^{i_j}$ for $j=1,\ldots,d$ and $i_j<t$.
We start counting all the possible Jordan splitting modulo $p^t$ of a $\mathbb Z_p$-lattice.
All the possible splittings of $(\overbar M, \bar b)$ in $s$-components can be obtained by choosing $s$ scales in $\binom{t}{s}$ ways. The multiplication by an non-square invertible element $\alpha\in \mathbb Z_p$ changes the discriminant of odd-rank Jordan blocks and  leaves the even-rank ones unchanged. Supposing that at least one of the involved rank is odd, since the discriminant of each block can be chosen
in two ways, we obtain 
   %\begin{equation}
   %(\overbar M,\bar b)=(\overbar M_1,\bar b_1)\perp (\overbar M_2,\bar b_2)\perp\dots\perp (\overbar M_s,\bar b_s)
   %\end{equation}
%such that  $\mathfrak s(\overbar M_j)=p^{i_j}$, for $j=1,\ldots,s$ and $0\leq i_j<t$.
    \begin{equation*}
        \displaystyle\sum_{s=1}^{t} \binom{t}{s}2^{s-1} H(n,s)
    \end{equation*}
possible non-equivalent Jordan splittings, where $H(n,s)$ is the number of decompositions of $n=n_1+n_2+\ldots+n_s$ with $n_i>1$ for all $i=1,\ldots,s$ and at least one $n_i$ odd.
We define also $K(n,s)$ the number of decompositions of $n=n_1+n_2+\ldots+n_s$ with $n_i>1$ for all $i=1,\ldots,s$.  
It remains to consider the case when all the ranks $n_i$, and in particular also $n$, are even. The corresponding decomposition of $n$ is determined by doubling a decomposition of $n/2$ in $s$ parts. There are $K(n/2,s)$ such decompositions, each of them carrying $2^s$ non-equivalent splittings. Hence there are in total 
\begin{gather*}
        \displaystyle\sum_{s=1}^{t} \binom{t}{s}2^{s-1} H(n,s)+ \sum_{s=1}^{t} \binom{t}{s}2^{s} K(n/2,s)=\\
        \displaystyle\sum_{s=1}^{t} \binom{t}{s}2^{s-1} K(n,s)+ \sum_{s=1}^{t} \binom{t}{s}2^{s-1} K(n/2,s).
    \end{gather*}
    It is well known that $K(n,s)=\binom{n-1}{s-1}$ concluding the proof.\qedhere

    %On the one hand, the multiplication with a non-square element of $\mathbb{Z}_p$ does not change the discriminant of even-rank Jordan blocks, then, by Proposition \ref{proposition:bilform}, each Jordan block of even rank determines $2$ isoclinism classes. On the other hand, the multiplication with a non-square element of $\mathbb Z_p $ changes the discriminant of  odd-rank Jordan blocks. Thus, Equation~\eqref{miserve} must be divided by $2$, when $n$ admits at least one block with odd rank, and this always happens if $n$ is odd. Then, we need to add again the sum divided by 2, in the case when $n$ is even, in which it may happen that a block of odd rank does not exist. \qedhere
\end{proof}
\bibliographystyle{abbrv}
\bibliography{citation}

\begin{thebibliography}{10}

\bibitem{nozzi}
R.~Aragona and G.~Nozzi.
\newblock A classification of $\mathbb{F}_{p^k}$-braces using bilinear forms.
\newblock {\em arXiv preprint arXiv:2403.08393}, 2024.

\bibitem{MR3465351}
D.~Bachiller.
\newblock Counterexample to a conjecture about braces.
\newblock {\em J. Algebra}, 453:160--176, 2016.

\bibitem{BCJ}
D.~Bachiller, F.~Ced\'{o}, and E.~Jespers.
\newblock Solutions of the {Y}ang-{B}axter equation associated with a left brace.
\newblock {\em J. Algebra}, 463:80--102, 2016.

\bibitem{civino2}
M.~Calderini, R.~Civino, and R.~Inverinizzi.
\newblock Differential experiments using parallel alternative operations.
\newblock {\em J. Math. Cryptol.}, 18(1):Paper No. 20230030, 9, 2024.

\bibitem{MR4130907}
A.~Caranti.
\newblock Bi-skew braces and regular subgroups of the holomorph.
\newblock {\em J. Algebra}, 562:647--665, 2020.

\bibitem{CDVS}
A.~Caranti, F.~Dalla~Volta, and M.~Sala.
\newblock Abelian regular subgroups of the affine group and radical rings.
\newblock {\em Publ. Math. Debrecen}, 69(3):297--308, 2006.

\bibitem{catino4}
F.~Catino, I.~Colazzo, and P.~Stefanelli.
\newblock Skew left braces with non-trivial annihilator.
\newblock {\em J. Algebra Appl.}, 18(2):1950033, 23, 2019.

\bibitem{civino1}
R.~Civino, C.~Blondeau, and M.~Sala.
\newblock Differential attacks: using alternative operations.
\newblock {\em Des. Codes Cryptogr.}, 87(2-3):225--247, 2019.

\bibitem{fedelecritto}
R.~Civino and V.~Fedele.
\newblock Binary bi-braces and applications to cryptography.
\newblock {\em arXiv:2404.09315 [math.GR]}, 2024.

\bibitem{MR1662447}
J.~H. Conway and N.~J.~A. Sloane.
\newblock {\em Sphere packings, lattices and groups}, volume 290 of {\em Grundlehren der mathematischen Wissenschaften [Fundamental Principles of Mathematical Sciences]}.
\newblock Springer-Verlag, New York, third edition, 1999.

\bibitem{Hopf3}
T.~Crespo, D.~Gil-Mu\~{n}oz, A.~Rio, and M.~Vela.
\newblock Inducing braces and {H}opf {G}alois structures.
\newblock {\em J. Pure Appl. Algebra}, 227(9):Paper No. 107371, 16, 2023.

\bibitem{MR4648557}
I.~Del~Corso.
\newblock Module braces: relations between the additive and the multiplicative groups.
\newblock {\em Ann. Mat. Pura Appl. (4)}, 202(6):3005--3025, 2023.

\bibitem{Hopf1}
S.~C. Featherstonhaugh, A.~Caranti, and L.~N. Childs.
\newblock Abelian {H}opf {G}alois structures on prime-power {G}alois field extensions.
\newblock {\em Trans. Amer. Math. Soc.}, 364(7):3675--3684, 2012.

\bibitem{MR3647970}
L.~Guarnieri and L.~Vendramin.
\newblock Skew braces and the {Y}ang-{B}axter equation.
\newblock {\em Math. Comp.}, 86(307):2519--2534, 2017.

\bibitem{Hallpgroup}
P.~Hall.
\newblock The classification of prime-power groups.
\newblock {\em J. Reine Angew. Math.}, 182:130--141, 1940.

\bibitem{MR4698318}
T.~Letourmy and L.~Vendramin.
\newblock Isoclinism of skew braces.
\newblock {\em Bull. Lond. Math. Soc.}, 55(6):2891--2906, 2023.

\bibitem{MR0152507}
O.~T. O'Meara.
\newblock {\em Introduction to quadratic forms}, volume Band 117 of {\em Die Grundlehren der mathematischen Wissenschaften}.
\newblock Academic Press, Inc., Publishers, New York; Springer-Verlag, Berlin-G\"{o}ttingen-Heidelberg, 1963.

\bibitem{RUMP2007153}
W.~Rump.
\newblock Braces, radical rings, and the quantum {Y}ang-{B}axter equation.
\newblock {\em J. Algebra}, 307(1):153--170, 2007.

\bibitem{smok2}
A.~Smoktunowicz.
\newblock A new formula for {L}azard's correspondence for finite braces and pre-{L}ie algebras.
\newblock {\em J. Algebra}, 594:202--229, 2022.

\end{thebibliography}
\end{document}